\documentclass[12pt]{amsart}
\usepackage{amsmath,amsxtra,amssymb,latexsym, amscd,amsthm,enumerate}
\usepackage{graphicx, tikz}
\textwidth=15.0 truecm
\textheight= 22.5 truecm
\oddsidemargin= 0.8cm 
\evensidemargin= 0.8cm
\topmargin = - 0.5cm
\footskip = 1cm
\parskip = 0.1cm
\pagestyle{plain}

\numberwithin{equation}{section}

\newtheorem{thm}{\bf Theorem}[section]
\newtheorem{lem}[thm]{\bf Lemma}

\newtheorem{cor}[thm]{\bf Corollary}
\newtheorem{prop}[thm]{\bf Proposition}

\theoremstyle{definition}
\newtheorem{ex}[thm]{Example}
\newtheorem{rem}[thm]{Remark}
\newtheorem{quest}[thm]{Question}
\newtheorem*{thm*}{Theorem}

\def\NN{{\mathbb N}}
\def\mm{{\mathfrak m}}

\def\a{{\alpha}}
\def\b{{\beta}}

\DeclareMathOperator{\reg}{reg}

\begin{document}

\title{On the reduction numbers and\\ the Castelnuovo-Mumford regularity\\ of projective monomial curves}

\author{Tran Thi Gia Lam}
\address{Phu Yen University, Tuy Hoa, Vietnam}
\email{tranthigialam@pyu.edu.vn}

\subjclass[2010]{Primary 13D02, 13F65; Secondary 13H10, 14H20}
\keywords{monomial curves, reduction number, Castelnuovo-Mumford regularity, Cohen-Macaulay ring, Buchsbaum ring}
%\thanks{}

\begin{abstract}
This paper gives explicit formulas for the reduction number and the Castelnuovo-Mumford regularity of projective monomial curves.
\end{abstract}

\maketitle

%%%%%%%%%%%%%%%%%%%%%%%%

\section*{Introduction}

Let $R$ be a Noetherian ring and $I$ an ideal in $R$. An ideal $J \subseteq I$ is called a {\em reduction} of $I$ if $JI^r = I^{r+1}$ for some $r \ge 0$. We denote by $r_J(I)$ the least number $r$ with this property and call it the {\em reduction number} of $I$ with respect to $J$. A reduction of $I$ is {\em minimal} if it does not contain any other reduction of $I$. The {\em reduction number} of $I$ is defined by
$$r(I) = \min\{r_J(I)|\ \text{$J$ is a minimal reduction of $I$}\}.$$
These notions were introduced by Northcott and Rees \cite{NR} and play an important role in the theory of integral closure and in the study of the blow-up rings, see e.g. \cite{Tr3,Va2}.

Let $R$ be a finitely generated standard graded algebra over a field $k$ and let $R_+$ be its maximal graded ideal. One calls $r(R_+)$ the reduction number of the algebra $R$ \cite{Tr4, Va1}. In this paper, we will investigate the reduction number of the coordinate ring of projective monomial curves, which are given parametrically by monomials of the same degree in two variables. Our investigation is motivated by the fact that in this case, the reduction number is a good approximation of the Castelnuovo-Mumford regularity $\reg(R)$ of $R$, which is a universal bound for many important invariants of $R$ \cite{Tr5}.  For instance, $\reg(R)$ is an upper bound for the maximal degree of the defining equations of $R$.  

Let $M$ be a set of monomials of degree $d$ in two variables $x,y$. Without restriction we may assume that $x^d, y^d \in M$. Let $R$ be the coordinate ring of the corresponding monomial curve. Then $R$ is isomorphic to the subalgebra $k[M]$ of $k[x,y]$ generated by the monomials of $M$.
Represent $R$ as a quotient of a polynomial ring $S$. Consider a minimal free resolution of $R$ over $S$:
$$0 \to F_p \to \cdots \to F_0 \to R \to 0$$
where $F_i$ is a graded free module over $S$, $i = 0,...,p$. Let $b_i$ be the maximal degree of the generators of $F_i$. Then the {\em Castelnuovo-Mumford regularity} of $R$ is the number
$$\reg(R) = \max\{b_i - i|\ i = 0,...,p\}.$$
Roughly speaking, the behavior of the graded components of $R$ will become regular in the degrees $\ge \reg(R)$. 
To compute $\reg(R)$ is usually a hard problem. By a result of Gruson, Lazarsfeld and Peskine \cite{GLP} we know that $\reg R \le d-|M|-2$. Finer bounds for $\reg(R)$ in terms of $M$ have been given in \cite{BGGM, HHS, Lv}.

It is well known that $\reg(R)$ is also the regularity of $R$ as a module over the ring $A = k[x^d,y^d]$, which is a  Noether normalization of $R$.  The minimal free resolution of $R$ over $A$ has the simple form
$$0 \to E_1 \to E_0 \to R \to 0,$$
where $E_i$ is a graded free module over $A$, $i = 0,1$. Let $c_i$ be the maximal degree of the generators of $F_i$. Then 
$$\reg(R) = \max\{c_0,c_1-1\}.$$

Let $Q = (x^d,y^d)$ be the ideal generated by $x^d, y^d$ in $R$. It is easy to see that $Q$ is a minimal reduction of $R_+$ and $r_Q(R_+) = c_0$. Hence, $r_Q(R_+)$ is a good approximation of $\reg(R)$. In particular, $\reg(R) = r_Q(R_+)$ if $R$ is a Cohen-Macaulay ring or a Buchsbaum ring \cite{Tr2}. It had been an open question whether $\reg(R) = r_Q(R_+)$ always holds until Hellus, Hoa and St\"uckrad found a counter-example in \cite{HHS}.  

The goal of this paper is to find {\em explicit formulas} for $r_Q(R_+)$ in terms of $M$. 
We will represent $M$ as a non-increasing sequence of non-negative integers
$$0=a_0\le a_1<a_2\le a_3<\ldots<a_{2m}\le a_{2m+1}=d $$
with $a_{2(i+1)}\ge a_{2i+1}+2$, $i = 0,...,m$, such that 
$$ M=\{x^{\alpha}y^{d-\alpha} \ | \ \alpha \in [a_{2i}, a_{2i+1}], i = 0,..., m\},$$
where $[a_{2i}, a_{2i+1}]$ denotes the set of integers $\a$, $a_{2i} \le \a \le a_{2i+1}$. 
We may view $[a_{2i}, a_{2i+1}]$ as an interval in the segment $[0,d]$.
Our main results give explicit formulas for $r_Q(R_+)$ and $\reg(R)$ in terms of $a_1,...,a_{2m+1}$ in the cases of Figure 1, where the given intervals are painted bold and the dots mean intervals of one point.

\begin{figure}[h]

\begin{tikzpicture}[scale=0.5]
\draw (-4,0.1) node {Case A:};
\draw [-] (0,0) -- (9,0);
\draw [line width=0.1cm] (4,0) -- (9,0);
\draw [line width=0.1cm] (0,0) -- (0.2,0);
\end{tikzpicture}
 
\begin{tikzpicture}[scale=0.5]
\draw (-4,0.1) node {Case B:};
\draw [-] (0,0) -- (9,0);
\draw [line width=0.1cm] (0,0) -- (0.2,0);
\draw [line width=0.1cm] (3,0) -- (6.5,0);
\draw [line width=0.1cm] (8.8,0) -- (9,0);
\end{tikzpicture}

\begin{tikzpicture}[scale=0.5]
\draw (-4,0.1) node {Case C:};
\draw [-] (0,0) -- (9,0);
\draw [line width=0.1cm] (0,0) -- (2,0);
\draw [line width=0.1cm] (6,0) -- (9,0);
\end{tikzpicture}

\begin{tikzpicture}[scale=0.5]
\draw (-4,0.1) node {Case D:};
\draw [-] (0,0) -- (9,0);
\draw [line width=0.1cm] (0,0) -- (0.2,0);
\draw [line width=0.1cm] (3,0) -- (4.5,0);
\draw [line width=0.1cm] (6,0) -- (9,0);
\end{tikzpicture}

\begin{tikzpicture}[scale=0.5]
\draw (-4,0.1) node {Case E:};
\draw [-] (0,0) -- (9,0);
\draw [line width=0.1cm] (0,0) -- (2,0);
\draw [line width=0.1cm] (4,0) -- (5.5,0);
\draw [line width=0.1cm] (7,0) -- (9,0);
\end{tikzpicture}
\caption{}
\end{figure}

Our results shows that the computation of $r_Q(R_+)$ depends very much on whether the two intervals on both ends consist of one point, i.e. $a_1 = 0$ or $a_{2m} = d$, or not. This was already observed in \cite{Tr1}, where the Cohen-Macaulayness of projective monomial curves was studied. Case C follows from results of Hellus, Hoa and St\"uckrad for smooth monomial curves \cite{HHS}. Our investigation can be served as a starting point for a systematic study of the Castelnuovo-Mumford regularity of projective monomial curves via the reduction number. 

The paper is organized as follows. The main part of the paper is divided in 5 sections. In Section 1 we prepare some properties of the minimal reduction $Q$ of $R_+$ and translate the computation of $r_Q(R_+)$ into a combinatorial problems on lattice points. Sections 2-6 will deal with the cases A--E, respectively.

%%%%%%%%%%%%%%%%%%%%%%%%%%%%%%%
\section{Preliminaries}

Throughout this paper, let $R = k[M]$ be the algebra over a field $k$ generated by a given set $M$ of monomials of degree $d > 0$ in two variables $x,y$, which contains $x^d,y^d$. If we set $\deg f = 1$ for all $f \in M$, then $R$ becomes a graded algebra. Suppose that $R = \oplus_{n \ge 0}R_n$.
Let $R_+ = \oplus_{n \ge 1}R_n$, which is the maximal graded ideal of $R$. 

\begin{lem} \label{reduct}
Let $Q$ be a homogeneous ideal of $R$ generated by elements of degree 1. Then $Q$ is a reduction of $R_+$ if and only if $R_n = Q_n$ for some $n \ge 1$, and $r_Q(R_+)$ is the least integer $n$ such that $R_{n+1} = Q_{n+1}$.
\end{lem}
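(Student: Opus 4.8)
The plan is to translate the ideal-theoretic identity defining a reduction into statements about graded pieces, exploiting that both $R_+$ and $Q$ are generated in degree $1$. First I would record the standard-graded identity $R_aR_b = R_{a+b}$ for all $a,b\ge 0$ (this is exactly the hypothesis that $R = k[M]$ is generated by the degree-$1$ elements of $M$), which gives $R_+^{\,r} = \bigoplus_{n\ge r} R_n$ for every $r\ge 0$. Writing $Q$ as the ideal generated by the linear subspace $Q_1\subseteq R_1$ spanned by its degree-$1$ generators, the same computation yields $Q_n = R_{n-1}Q_1$ for $n\ge 1$ and, more usefully, the product identity $Q_aR_b = Q_{a+b}$ whenever $a\ge 1$ (since $Q_aR_b = R_{a-1}Q_1R_b = R_{a+b-1}Q_1 = Q_{a+b}$).

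Combining these, I would compute the graded pieces of $QR_+^{\,r}$ directly. In degree $n$ the product ideal is $\sum_{a\ge 1,\ b\ge r,\ a+b=n} Q_aR_b$, and by the product identity each nonzero summand equals $Q_n$; such a decomposition exists precisely when $n\ge r+1$. Hence $QR_+^{\,r} = \bigoplus_{n\ge r+1} Q_n$, while $R_+^{\,r+1} = \bigoplus_{n\ge r+1} R_n$. Comparing graded components, the reduction equation $QR_+^{\,r} = R_+^{\,r+1}$ holds if and only if $Q_n = R_n$ for every $n\ge r+1$.

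The step that makes the statement clean, and the one I expect to carry the real content, is a monotonicity property of the equality locus $E = \{\,n\ge 1 : Q_n = R_n\,\}$. Using the special case $Q_{n+1} = Q_nR_1$ of the product identity, I would show that $Q_n = R_n$ forces $Q_{n+1} = R_nR_1 = R_{n+1}$; thus $E$ is upward closed, so $E = \{\,n : n\ge n_0\,\}$ for some threshold $n_0$, or $E=\emptyset$. The mild subtlety to watch is that this implication is one-directional, so one argues only that equality propagates upward, not that it can be traced downward.

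With this in hand both assertions follow formally. The condition that $Q_n = R_n$ for all $n\ge r+1$ for \emph{some} $r$ is, by upward closedness, equivalent to $E\neq\emptyset$, i.e. to $R_n = Q_n$ for a single $n\ge 1$; this is the first claim. For the second, the least $r\ge 0$ for which $Q_n = R_n$ holds for all $n\ge r+1$ is $r = n_0-1$, which is exactly the least integer $n$ with $R_{n+1} = Q_{n+1}$; by the computation of the previous paragraph this $r$ is $r_Q(R_+)$, completing the proof.
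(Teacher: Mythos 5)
Your proof is correct and follows essentially the same route as the paper: both arguments reduce the ideal identity $Q(R_+)^n=(R_+)^{n+1}$ to a comparison of graded components using the fact that $R_+$ and $Q$ are generated in degree $1$. You simply spell out in more detail (via the identity $Q_aR_b=Q_{a+b}$ and the upward closedness of $\{n: Q_n=R_n\}$) what the paper compresses into the remark that $(R_+)^n$ is generated by $R_n$ and $Q^n$ by $Q_n$.
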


\begin{proof}
Since $R$ is a standard graded algebra, i.e. it is generated by $R_1$, $(R_+)^n$ is generated by $R_n$.
Similary, since $Q$ is generated by $Q_1$, $Q^n$ is generated by $Q_n$. Therefore, $(R_+)^{n+1} = Q(R_+)^n$ if and only if $R_{n+1} = Q_{n+1}$. Hence, the conclusion follows from the definition of reduction and reduction number.
\end{proof}

The proof shows that Lemma \ref{reduct} holds for any standard graded algebra.

From now on, let $Q = (x^d, y^d)$ be the ideal generated by $x^d, y^d$ in $R$.

\begin{lem}
$Q$ is the unique minimal reduction of $R_+$ generated by monomials.
\end{lem}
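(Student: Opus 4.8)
The plan is to combine Lemma~\ref{reduct} with a unique-factorization property of the two corner monomials $x^{nd}$ and $y^{nd}$. First I would pin down the shape of the objects involved. Since $R$ is standard graded with $R_1=\langle M\rangle$, the degree-one monomials of $R$ are exactly the elements of $M$, so any homogeneous ideal generated by monomials of degree $1$ has the form $J=(T)$ for some subset $T\subseteq M$. Because $R$ is standard graded, $R_{n-1}=R_1^{n-1}$, and hence the graded component $J_n$ is the $k$-span of those degree-$n$ monomials $x^{\alpha}y^{nd-\alpha}$ that can be written as a product of $n$ monomials from $M$ at least one of which lies in $T$. By Lemma~\ref{reduct}, such a $J$ is a reduction of $R_+$ if and only if $R_n=J_n$ for some $n\ge 1$. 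I would also recall (as already noted in the Introduction, or deduced from $R$ being a finite module over $A=k[x^d,y^d]$) that $Q=(x^d,y^d)$ is itself a minimal reduction of $R_+$.

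The crux is to show that every monomial reduction $J=(T)$ necessarily contains both $x^d$ and $y^d$. The key observation is a unique-factorization property of the corner monomials: if $x^{nd}$ is written as a product $m_1\cdots m_n$ with each $m_i=x^{\alpha_i}y^{d-\alpha_i}\in M$, then the total $y$-exponent $\sum_i(d-\alpha_i)$ equals $0$, and since each summand is nonnegative we must have $\alpha_i=d$ for every $i$; that is, $x^{nd}=(x^d)^n$ is the \emph{only} way to factor $x^{nd}$ into $n$ elements of $M$. Consequently, whenever $R_n=J_n$, the element $x^{nd}\in R_n$ can lie in $J_n$ only if one of its factors belongs to $T$, and that factor can only be $x^d$; thus $x^d\in T$. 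The symmetric argument applied to $y^{nd}=(y^d)^n$ forces $y^d\in T$. Hence $Q=(x^d,y^d)\subseteq J$ for every reduction $J$ of $R_+$ generated by monomials.

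Finally I would assemble the conclusion. Every monomial reduction of $R_+$ contains the reduction $Q$; in particular $Q$ contains no proper monomial reduction, and if $J$ is a minimal reduction generated by monomials then $Q\subseteq J$ with $Q$ a reduction, so minimality of $J$ forces $J=Q$. Together with the fact that $Q$ is a minimal reduction, this shows $Q$ is the unique minimal reduction of $R_+$ generated by monomials. The only genuinely delicate point is the factorization step: one must be careful that membership in the graded piece $J_n$ really does require a factor drawn from $T$. Once this is made precise the argument is forced, and, since everything takes place among monomials, no hypothesis on the field $k$ is needed.
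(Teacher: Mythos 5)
Your proof is correct, and its core --- the uniqueness step --- rests on the same observation as the paper's: the corner monomials $x^{nd}$ and $y^{nd}$ factor into elements of $M$ only as $(x^d)^n$ and $(y^d)^n$, so any monomial generating set of a reduction must contain $x^d$ and $y^d$. You package this more cleanly (via the vanishing of the total $y$-exponent) and in greater generality: the paper only excludes minimal reductions generated by \emph{two} monomials, implicitly using that minimal reductions of $R_+$ are $2$-generated, whereas you show outright that \emph{every} reduction generated by degree-one monomials contains $Q$, so minimality alone forces equality; this also sidesteps the question of how many generators a minimal reduction has. Where you genuinely diverge is in establishing that $Q$ is itself a minimal reduction. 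The paper proves this from scratch: a pigeonhole argument shows that any monomial of degree $r(d-1)+1$ in $R$ (with $r=|M|$) is divisible by some $g^d=(x^d)^a(y^d)^b$, $g=x^ay^b\in M$, hence $R_{r(d-1)+1}=Q_{r(d-1)+1}$, and then Northcott--Rees (a $2$-generated reduction in a $2$-dimensional ring is minimal) gives minimality. You instead invoke finiteness of $R$ over $A=k[x^d,y^d]$ for the reduction property --- a perfectly valid alternative --- but you only \emph{quote} the minimality of $Q$ from the Introduction, which this lemma is partly meant to justify; you should supply the Northcott--Rees dimension count (or an equivalent) to close that loop. Two small points worth making explicit: a reduction ``generated by monomials'' could a priori have generators of degree $\ge 2$, but those contribute nothing to $(JR_+^r)_{r+1}$, so one may indeed reduce to $J=(T)$ with $T\subseteq M$ as you assume; and the step ``$x^{nd}\in J_n$ forces $x^{nd}$ to equal one of the spanning products'' uses the linear independence of distinct monomials, which you correctly flag as the delicate point.
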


\begin{proof}
Let $f$ be an arbitrary monomial of degree $r(d-1)+1$ in $R$, where $r$ is the number of monomials in $M$. 
Since $f$ is a product of $r(d-1)+1$ monomials in $M$, $f$ is divisible by $g^d$ for some $g \in M$. Since $g = x^ay^b$ for some $a,b \ge 0$, it follows that $f$ is divisible by $x^d$ or $y^d$ in $R$. So we conclude that $R_{r(d-1)+1} = Q_{r(d-1)+1}$. By Lemma \ref{reduct}, $Q$ is a reduction of $R_+$. Since $\dim R = 2$, any reduction of $R_+$ generated by two elements is a minimal reduction of $R_+$ \cite[Theorem 2 of p.~151 and Theorem 1 of p.~154]{NR}. Therefore, $Q$ is a minimal reduction of $R_+$.

If there were an other minimal reduction of $R_+$ generated by two monomials $f,g$, then $f, g \in M$ and $x^{nd}, y^{nd}$ must be divisible by $f$ or $g$ for some $n \ge 1$. From this it follows that $f, g$ must be powers of $x$ or $y$. Since $x^d,y^d$ are the only generators of $M$ having these forms, we conclude that $f,g$ are $x^d,y^d$. 
\end{proof}

Despite the uniqueness of $Q$ we may have $r_Q(R_+) > r(R_+)$. This can be seen as follows.

Let $I$ be the ideal generated by $M$ in $k[x,y]$. Let $\mm$ be the maximal graded ideal of $k[x,y]$.
Let $F(I) = \oplus I^n/\mm I^n$, which is the fiber ring of $I$. It is easy to see that $F(I) \cong R$. 
By \cite[Lemma 4.1]{Tr4}, there is a one-to-one correspondence between the minimal reductions of $I$ and the minimal reductions of $R_+$, which preserves the reduction number.
Huckaba showed that the reduction number of $I$ with respect to a minimal reduction $J$ depens on the choice of $J$  \cite[Example 3.1]{Hu}. His result leads to the following example.

\begin{ex} 
Let $I = (x^7,x^6y,x^2y^5,y^7) \subseteq k[x,y]$ and $J_1 = (x^7,y^7)$, $J_2 = (x^7,x^6y+y^7)$. By \cite[Example 3.1]{Hu}, $r_{J_1}(I) = 4 > r_{J_2}(I)$. Hence, $r_{J_1}(I) > r(I)$. Let $R = k[x^7,x^6y,x^2y^5,y^7]$ and $Q = (x^7,y^7) \subseteq R$. By  \cite[Lemma 4.1]{Tr4}, this implies $r_Q(R_+) > r(R_+)$. 
\end{ex}

On the other hands, there are many cases where $r_Q(R_+) = r(R_+)$.
In the following we say that $R$ is a Buchsbaum ring if the local ring of $R$ at the prime ideal $R_+$ is a Buchsbaum ring.
We refer the reader to \cite{SV} for the definition of Buchsbaum local rings. 
Notice that this class of rings contains all Cohen-Macaulay ring.

\begin{prop} \label{buchs}
Let $R$ be a Buchsbaum ring. Then
$$\reg(R) = r_Q(R_+) = r(R_+).$$
\end{prop}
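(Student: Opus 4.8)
The plan is to leverage the already-cited identity $\reg(R)=r_Q(R_+)$ for Buchsbaum rings \cite{Tr2} and to upgrade it to the intrinsic reduction number $r(R_+)$ by showing that \emph{every} minimal reduction of $R_+$ has reduction number equal to $\reg(R)$. Since $Q$ is itself a minimal reduction, the inequality $r(R_+)\le r_Q(R_+)=\reg(R)$ is automatic, so the entire content is the reverse bound, namely $r_J(R_+)\ge\reg(R)$ for an arbitrary minimal reduction $J$. Because reduction numbers, $\reg(R)$, and the Buchsbaum property are all preserved under the faithfully flat base change $k\to\bar k$, I would first reduce to the case of an infinite field. Over an infinite field every minimal reduction $J$ of $R_+$ is generated by two linear forms $u,v$ forming a homogeneous system of parameters, and Lemma \ref{reduct} identifies $r_J(R_+)=\max\{n\mid (R/J)_n\neq 0\}$. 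The problem thus becomes: compute the top nonvanishing degree of $R/J$ and show it does not depend on the choice of $u,v$.

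First I would record the local-cohomology description $\reg(R)=\max\{a_1+1,\,a_2+2\}$, where $a_i$ denotes the largest degree in which $H^i_{R_+}(R)$ is nonzero, noting that $H^0_{R_+}(R)=0$ and $\depth R\ge 1$ since $R$ is a domain. The heart of the argument is a two-step descent along $u$ and $v$. As $u$ is a nonzerodivisor, the sequence $0\to R(-1)\xrightarrow{u}R\to R/uR\to 0$ yields a long exact sequence in local cohomology, and here the Buchsbaum hypothesis enters decisively through the vanishing $R_+H^1_{R_+}(R)=0$: the form $u$ annihilates $H^1_{R_+}(R)$, so I can read off $H^0_{R_+}(R/uR)\cong H^1_{R_+}(R)(-1)$ together with a short exact sequence $0\to H^1_{R_+}(R)\to H^1_{R_+}(R/uR)\to (0:_{H^2_{R_+}(R)(-1)}u)\to 0$. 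Because $H^2_{R_+}(R)$ vanishes above degree $a_2$, multiplication by $u$ is zero on its top graded piece, so the last term has top degree exactly $a_2+1$; hence $H^1_{R_+}(R/uR)$ has top degree $\max\{a_1,a_2+1\}$ while $H^0_{R_+}(R/uR)$ has top degree $a_1+1$.

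In the second step I would split off the finite-length submodule $H^0_{R_+}(R/uR)$, setting $N=(R/uR)/H^0_{R_+}(R/uR)$, a one-dimensional Cohen--Macaulay module with $H^1_{R_+}(N)\cong H^1_{R_+}(R/uR)$. Since $J$ is a system of parameters, $v$ is a parameter on $R/uR$, hence a nonzerodivisor on $N$, and the Buchsbaum vanishing makes $v$ annihilate the finite-length module $H^0_{R_+}(R/uR)$. Feeding these facts into $0\to N(-1)\xrightarrow{v}N\to N/vN\to 0$ and comparing with $R/J=(R/uR)/v(R/uR)$, a short diagram chase produces an exact sequence $0\to H^0_{R_+}(R/uR)\to R/J\to N/vN\to 0$ whose two ends have top degrees $a_1+1$ and $a_1(N)+1=\max\{a_1+1,a_2+2\}=\reg(R)$. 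Therefore $r_J(R_+)=\max\{n\mid (R/J)_n\neq 0\}=\reg(R)$ for every minimal reduction $J$, and taking the minimum over $J$ gives $r(R_+)=\reg(R)=r_Q(R_+)$; the case $J=Q$ simultaneously reproves the identity drawn from \cite{Tr2}.

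I expect the main obstacle to be precisely the passage from $Q$ to an arbitrary minimal reduction: one must show that the top degree of $R/J$ is independent of $u,v$, and this \emph{fails} in general, as the example preceding the proposition shows. The delicate points are verifying that $u$ and $v$ act as zero on the relevant finite-length local cohomology --- which is exactly the condition $R_+H^i_{R_+}(R)=0$ for $i<\dim R$ --- and tracking the grading shifts so that the two contributions assemble to $\max\{a_1+1,a_2+2\}$ and not to something larger. Once these vanishings are established, the $a$-invariant bookkeeping is routine.
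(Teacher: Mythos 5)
Your argument is correct in substance, but it takes a genuinely different route from the paper: the paper's entire proof of this proposition is a citation of \cite[Corollary 3.5]{Tr2} (plus a warning that the reduction number is defined differently there), whereas you reconstruct that result from scratch. Your local cohomology bookkeeping checks out: with $a_i$ the top degree of $H^i_{R_+}(R)$, the quasi-Buchsbaum vanishing $R_+H^1_{R_+}(R)=0$ gives $H^0_{R_+}(R/uR)\cong H^1_{R_+}(R)(-1)$ and the exact sequence $0\to H^1_{R_+}(R)\to H^1_{R_+}(R/uR)\to (0:_{H^2_{R_+}(R)(-1)}u)\to 0$, the snake lemma gives $0\to H^0_{R_+}(R/uR)\to R/J\to N/vN\to 0$, and the top degrees assemble to $\max\{a_1+1,a_2+2\}=\reg(R)$ independently of the linear system of parameters $u,v$; this is essentially the mechanism of Trung's proof in \cite{Tr2}, so what your version buys is self-containedness (and it reproves $\reg(R)=r_Q(R_+)$ along the way), at the cost of importing the standard local cohomology characterization of regularity and of the Buchsbaum property. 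One point should be patched: with the paper's definition $r(R_+)=\min_J r_J(R_+)$ over \emph{all} minimal reductions, a minimal reduction need not be homogeneous, so ``every minimal reduction is generated by two linear forms'' is not literally true. The fix is easy: if $J=(f,g)$ is any minimal reduction and $u,v$ are the degree-one components of $f,g$, then comparing the degree-$(n+1)$ graded pieces in $JR_+^n=R_+^{n+1}$ shows $R_{n+1}=uR_n+vR_n$, so $(u,v)$ is a linear minimal reduction with $r_{(u,v)}(R_+)\le r_J(R_+)$, which reduces everything to the case you treat. (Also, when $R$ is Cohen--Macaulay one should read $a_1=-\infty$ throughout; the formulas still close up.) Neither point affects the conclusion.
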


\begin{proof}
This result follows from \cite[Corollary 3.5]{Tr2}. Note that the reduction number is defined there differently.
\end{proof}

We always have $\reg(R) \ge r_Q(R_+)$ \cite[Proposition 3.2]{Tr2}. There are examples showing that $\reg(R) > r_Q(R_+)$ even when the corresponding monomial curve is smooth. These examples are found first by Hellus, Hoa and St\"uckrad \cite[Example 3.2]{HHS}.

Now we will translate the computation of $r_Q(R_+)$ into a combinatorial problem of lattice points in $\Bbb N^2.$ For this purpose, we identify any monomial $x^\a y^\b$ with the point $(\a,\b)$.

For $n \ge 0$ let 
$$H_n = \{(\alpha,\beta) \in \NN^2|\ x^{\alpha}y^{\beta} \in R, \alpha + \beta = nd\}.$$
Then $R_n$ is the vector space generated by the monomials $x^{\alpha}y^{\beta}$, $(\alpha,\beta) \in H_n$.
Let $e_1=(d,0)$ and $e_2=(0,d)$. 
Then $Q_n$ is the vector space generated by the monomials $x^{\alpha}y^{\beta}$, $(\alpha,\beta) \in H_{n-1} + \{e_1,e_2\}$ (Minkowski sum). Using Lemma \ref{reduct}, we immediately get
$$r_Q(R_+) = \min\{n \in \NN|\ H_{n+1} = H_n + \{e_1,e_2\}\}.$$

This formula can be simplified by considering the first components of the elements of $H_n$.

\begin{lem} \label{interval}
Let $E_n=\{\alpha|\ (\alpha,nd-\alpha)\in H_n\}.$ Then
$$r_Q(R_+) = \min\{n \in \NN|\ E_{n+1} = E_n + \{0,d\}\}.$$
\end{lem}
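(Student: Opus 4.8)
The plan is to exploit the fact that every lattice point occurring here lies on a line of constant coordinate sum, so that a point is completely determined by its first coordinate. Concretely, for a fixed $n$ the points of $H_n$ all satisfy $\alpha + \beta = nd$, so the projection $(\alpha, nd - \alpha) \mapsto \alpha$ is a bijection between $H_n$ and $E_n$. The strategy is to show that, under the analogous projection at level $n+1$, the two sets appearing in the formula already derived from Lemma \ref{reduct} (namely $H_{n+1}$ and $H_n + \{e_1, e_2\}$) correspond exactly to $E_{n+1}$ and $E_n + \{0, d\}$, and then to transport the equality through the bijection.

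First I would observe that both $e_1 = (d,0)$ and $e_2 = (0,d)$ raise the coordinate sum of a point by $d$. Hence, if $(\alpha, \beta) \in H_n$, so that $\alpha + \beta = nd$, then both $(\alpha, \beta) + e_1$ and $(\alpha, \beta) + e_2$ have coordinate sum $(n+1)d$. Therefore $H_n + \{e_1, e_2\}$ consists entirely of points lying on the same line $\alpha + \beta = (n+1)d$ as the points of $H_{n+1}$, and the first-coordinate projection restricts to a bijection on this line as well, the second coordinate being recovered as $(n+1)d - \alpha$.

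Next I would compute the first coordinates explicitly. Adding $e_1 = (d, 0)$ sends the first coordinate $\alpha$ to $\alpha + d$, while adding $e_2 = (0,d)$ leaves the first coordinate unchanged. So the set of first coordinates of $H_n + \{e_1, e_2\}$ is $(E_n + d) \cup E_n = E_n + \{0, d\}$, whereas the set of first coordinates of $H_{n+1}$ is by definition $E_{n+1}$. Since both $H_{n+1}$ and $H_n + \{e_1, e_2\}$ are contained in the single line $\alpha + \beta = (n+1)d$, on which the projection to the first coordinate is injective, the equality $H_{n+1} = H_n + \{e_1, e_2\}$ holds if and only if $E_{n+1} = E_n + \{0, d\}$.

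Finally, taking the minimum over $n$ of both conditions and using the formula $r_Q(R_+) = \min\{n \in \NN \mid H_{n+1} = H_n + \{e_1, e_2\}\}$, the claimed identity follows at once. I do not expect a genuine obstacle here: the only point requiring care is to confirm that the projection is a true bijection (not merely a surjection) on each fixed line, so that set equality upstairs is equivalent to equality of the first-coordinate sets. This is immediate, since on a line of fixed coordinate sum the second coordinate is determined by the first.
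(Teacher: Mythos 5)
Your proposal is correct and follows essentially the same route as the paper: both identify $H_{n+1}$ and $H_n+\{e_1,e_2\}$ with their first-coordinate sets $E_{n+1}$ and $E_n+\{0,d\}$ via the bijection on the line $\alpha+\beta=(n+1)d$, and then invoke the formula derived from Lemma \ref{reduct}. Your write-up merely makes the bijectivity of the projection more explicit than the paper does.
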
 

\begin{proof}
We have
$$H_{n+1} = \{(\a,(n+1)d-\a)|\  \a \in E_{n+1}\},$$
$$H_n + \{e_1,e_2\} = \{(\a,(n+1)d-\a)|\  \a \in E_n + \{0,d\}\}.$$
Therefore, $H_{n+1} = H_n + \{e_1,e_2\}$ if and only if $E_{n+1} = E_n + \{0,d\}$.
\end{proof}

We will keep the notations of this section throughout  this paper.

%%%%%%%%%%%%%%%%%%%%%%%%%%%%%%%%%%
\section{Case A}
 
In this section we study the ring $R = k[M]$, where $M$ is represented by the union of a point and an interval (see Figure 2).
\bigskip

\begin{figure}[h]

\begin{tikzpicture}[scale=0.5]
\draw [-] (0,0) -- (9,0);
\draw [line width=0.1cm] (4,0) -- (9,0);
\draw [line width=0.1cm] (-0.1,0) -- (0.1,0);
\end{tikzpicture}
\caption{}
\end{figure}

\begin{thm} \label{1}
Let $1 < a < d$ be integers. Let $R = k[x^{\alpha}y^{d-\alpha} | \ \alpha \in \{0\} \cup [a,d]]$ and $Q = (x^d,y^d)$. 
Then $R$ is a Cohen-Macaulay ring and
$$\reg(R) = r_Q(R_+)  = \left\lceil \dfrac {d-1}{d-a}\right\rceil.$$
\end {thm}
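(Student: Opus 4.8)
The plan is to prove the two assertions separately: first that $R$ is Cohen--Macaulay, and then the value of $r_Q(R_+)$. The equality $\reg(R) = r_Q(R_+)$ will then follow immediately from Proposition \ref{buchs}, since every Cohen--Macaulay ring is Buchsbaum. The combinatorial engine for both parts is an explicit description of the sets $E_n$ of Lemma \ref{interval}. Since $R_n$ is spanned by products of $n$ generators of $M$, the set $E_n$ is the $n$-fold sumset of $E_1 = \{0\}\cup[a,d]$; choosing $k$ of the $n$ summands in $[a,d]$ and the rest equal to $0$ contributes the $k$-fold sumset $[ka,kd]$ of the interval $[a,d]$, so that
$$E_n = \{0\}\cup\bigcup_{k=1}^{n}[ka,kd].$$

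For Cohen--Macaulayness I will show that $x^d,y^d$ is a regular sequence on $R$; as $\dim R = 2$ this is equivalent to $R$ being Cohen--Macaulay. Because $R$ is a domain, $x^d$ is a nonzerodivisor, and since $x^dR$ is a monomial ideal and $y^d$ a monomial, the nonzerodivisor property of $y^d$ on $R/x^dR$ reduces to the purely combinatorial statement that for every $n$, if $\gamma\in E_n$ and $\gamma+d\in E_n$ then $\gamma\in E_{n-1}$. To verify it, note that $\gamma+d\in E_n$ forces $\gamma+d\le nd$, hence $\gamma\le(n-1)d$, and one checks the inclusion $E_n\cap[0,(n-1)d]\subseteq E_{n-1}$ directly from the displayed formula: if $\gamma$ lies in some $[ka,kd]$ with $k\le n-1$ this is clear, while if $\gamma\in[na,nd]$ with $\gamma\le(n-1)d$, then $(n-1)a<na\le\gamma\le(n-1)d$ places $\gamma$ in $[(n-1)a,(n-1)d]\subseteq E_{n-1}$.

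The computation of $r_Q(R_+)$ is the heart of the proof. By Lemma \ref{interval} it suffices to find the least $n$ with $E_{n+1} = E_n + \{0,d\}$; the inclusion $E_n+\{0,d\}\subseteq E_{n+1}$ is automatic, so only the reverse must be controlled. The key observation is that $E_{n+1}\cap[0,nd] = E_n$ (again from the formula, since the new interval $[(n+1)a,(n+1)d]$ meets $[0,nd]$ inside the old top interval $[na,nd]$), so every $\beta\in E_{n+1}$ with $\beta\le nd$ already lies in $E_n$, and the condition reduces to covering the top part
$$E_{n+1}\cap(nd,(n+1)d] = [\max((n+1)a,\,nd+1),\,(n+1)d]$$
by $E_n+d$. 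For such $\beta$ the shifted point $\beta-d$ lies in $((n-1)d,nd]$, a range in which $\beta-d\in E_n$ is equivalent to $\beta-d\ge na$; thus the whole top part is covered precisely when its left endpoint satisfies $\max((n+1)a,\,nd+1)\ge na+d$. Since $(n+1)a<na+d$ always, this is equivalent to $nd+1\ge na+d$, that is, to $n\ge(d-1)/(d-a)$. Hence $E_{n+1}=E_n+\{0,d\}$ holds exactly for $n\ge\lceil(d-1)/(d-a)\rceil$, which yields $r_Q(R_+)=\lceil(d-1)/(d-a)\rceil$.

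I expect the main obstacle to be precisely the bookkeeping in this last paragraph: establishing $E_{n+1}\cap[0,nd]=E_n$ cleanly, locating the top part $E_{n+1}\cap(nd,(n+1)d]$ and its left endpoint correctly, and handling the off-by-one in the endpoint inequalities so that the binding constraint collapses to the single clean inequality $n\ge(d-1)/(d-a)$. Once these boundary computations are pinned down, Cohen--Macaulayness and the reduction number combine through Proposition \ref{buchs} to give $\reg(R)=r_Q(R_+)=\lceil(d-1)/(d-a)\rceil$.
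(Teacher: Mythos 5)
Your proof is correct. For the reduction number, your route is essentially the paper's: both start from the same description $E_n=\bigcup_{k=0}^{n}[ka,kd]$ and reduce, via Lemma \ref{interval}, to the single inequality $nd+1\ge na+d$, i.e.\ $n\ge (d-1)/(d-a)$; you organize the bookkeeping by splitting $E_{n+1}$ at the threshold $nd$ and covering the top piece by $E_n+d$, whereas the paper computes the set difference $[(n+1)a,(n+1)d]\setminus[na+d,(n+1)d]=[(n+1)a,na+d-1]$ and asks when it lies in $E_n$ --- the same computation in a slightly different order. The genuine difference is in the Cohen--Macaulayness: the paper simply cites \cite[Theorem 3.5]{Tr1}, while you give a self-contained argument showing $x^d,y^d$ is a regular sequence, reducing (correctly, since $x^dR$ is spanned by monomials) to the combinatorial implication that $\gamma\in E_n$ and $\gamma+d\in E_n$ force $\gamma\in E_{n-1}$, which your formula for $E_n$ verifies. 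This buys independence from the external reference at the cost of a little extra work; it is a legitimate and complete substitute, and the final equality $\reg(R)=r_Q(R_+)$ then follows from Proposition \ref{buchs} exactly as in the paper.
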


\begin{proof} 
The Cohen-Macaulayness of $R$ follows from \cite[Theorem 3.5]{Tr1}. 
By Proposition \ref{buchs}, it remains to show that 
$r_Q(R_+)  =  \left\lceil \dfrac {d-1}{d-a}\right\rceil,$
which will be proved by using Lemma \ref{interval}.

Let $E_n := \{\a|\  x^\a y^{nd-\a} \in R\}$. It is easy to see that $E_n = \bigcup_{i=0}^{n}[ia, id]$. Hence
\begin{equation}
E_{n+1} = E_n \cup [(n+1)a,(n+1)d],
\end{equation}
\begin{align*}
E_n + \{0,d\} & = (E_n + 0)\cup (E_n +d)\\
&= E_n \cup \bigcup_{i = 0}^n [ia+d,(i+1)d].
\end{align*}
For $i = 0,...,n-1$, we have $[ia+d,(i+1)d] \subseteq [(i+1)a,(i+1)d] \subseteq E_n$. Hence
\begin{equation}
E_n + \{0,d\} = E_n \cup [na+d,(n+1)d].
\end{equation}

Note that 
$$[(n+1)a,(n+1)d] \setminus [na+d,(n+1)d] = [(n+1)a,na+d-1].$$
Then it follows from (2.1) and (2.2) that $E_{n+1} = E_n +\{0,d\}$ if and only if 
$$[(n+1)a,na+d-1] \subseteq E_n.$$

We have $na+d-1 \in E_n$ if and only if $na+d-1 \in [ia,id]$ for some $i = 0,...,n$, which implies $na+d+1 \le id \le nd$. Since $na < (n+1)a$, $[(n+1)a,na+d-1] \subseteq [na,nd]$. 
Therefore, $[(n+1)a,na+d-1] \subseteq E_n$ if and only if $[(n+1)a,na+d-1] \subseteq [na,nd]$. This condition is satisfied  
if and only if $na + d - 1 \le nd$. Thus,  $E_{n+1} = E_n +\{0,d\}$ if and only if $\dfrac{d-1}{d-a} \le n$. 
By Lemma \ref{interval}, this implies  $r_Q(R_+)  = \left\lceil \dfrac {d-1}{d-a}\right\rceil.$
\end{proof}

From Theorem \ref{1} we  immediately obtain the following result on monomial space curves.

\begin{cor}
Let $R = k[x^d,x^{d-1}y,x^{d-2}y^2,y^d],$ $d \ge 4$, and $Q = (x^d,y^d)$. Then $R$ is a Cohen-Macaulay ring and
$$\reg(R)  = r_Q(R_+) = \left\lceil \dfrac {d-1}{2}\right\rceil.$$
\end {cor}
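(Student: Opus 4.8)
The plan is to recognize the Corollary as the special case of Theorem \ref{1} in which the interval $[a,d]$ degenerates to its smallest nontrivial form. The ring $R = k[x^d, x^{d-1}y, x^{d-2}y^2, y^d]$ corresponds to the monomial set $M = \{x^\alpha y^{d-\alpha} \mid \alpha \in \{0\} \cup \{d-2, d-1, d\}\}$, which is exactly the union of the single point $\{0\}$ and the interval $[a,d]$ with $a = d-2$. First I would verify the hypotheses of Theorem \ref{1}: we need $1 < a < d$, and since $a = d-2$ with $d \ge 4$, we have $a \ge 2 > 1$ and $a = d-2 < d$, so the theorem applies directly.

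Next I would simply substitute $a = d-2$ into the formula $r_Q(R_+) = \reg(R) = \left\lceil \frac{d-1}{d-a} \right\rceil$. Since $d - a = d - (d-2) = 2$, this gives
$$\reg(R) = r_Q(R_+) = \left\lceil \frac{d-1}{2} \right\rceil,$$
which is precisely the claimed value. The Cohen-Macaulayness of $R$ is inherited verbatim from the corresponding assertion in Theorem \ref{1}, since that theorem establishes it for every ring of the stated form.

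The only genuine point requiring care is confirming that the listed generators $x^d, x^{d-1}y, x^{d-2}y^2, y^d$ match the generating set $\{x^\alpha y^{d-\alpha} \mid \alpha \in \{0\} \cup [a,d]\}$ under the identification $a = d-2$. The exponents of $x$ appearing are $d$, $d-1$, $d-2$, and $0$; these are exactly the elements of $\{0\} \cup \{d-2, d-1, d\} = \{0\} \cup [d-2, d]$, so the identification is exact and no generator is missing or extraneous. I do not anticipate any real obstacle here: the Corollary is a direct specialization, and the entire content is the arithmetic simplification $\left\lceil \frac{d-1}{d-a}\right\rceil = \left\lceil \frac{d-1}{2}\right\rceil$ once $a = d-2$ is plugged in. The restriction $d \ge 4$ guarantees $a = d-2 \ge 2 > 1$, so the strict inequality $1 < a$ needed by Theorem \ref{1} holds, which is why the hypothesis is stated as $d \ge 4$ rather than $d \ge 3$.
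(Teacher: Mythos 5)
Your proposal is correct and is exactly the paper's (implicit) argument: the corollary is stated as an immediate consequence of Theorem \ref{1}, obtained by setting $a = d-2$, noting that $d \ge 4$ gives $1 < a < d$, and computing $\left\lceil \frac{d-1}{d-a}\right\rceil = \left\lceil \frac{d-1}{2}\right\rceil$. Nothing is missing.
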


\begin{rem}
Theorem \ref{1} also covers the case $R = k[x^{d-\alpha}y^\a | \ \alpha \in [0,a] \cup \{d\}]$ by permuting $x,y$.
\end{rem}

%%%%%%%%%%%%%%%%%%%%%%%%%%%%%%%%%%
\section{Case B}

In this section we study the ring $R = k[M]$, where $M$ is represented by two points at both ends and an interval in the middle (see Figure 3). In this case, $R$ is always a Cohen-Macaulay ring \cite[Theorem 2.1]{Tr1}. By Proposition \ref{buchs}, this implies
$\reg(R) = r_Q(R_+).$
\bigskip

\begin{figure}[h]
\begin{tikzpicture}[scale=0.5]
\draw [-] (0,0) -- (9,0);
\draw [line width=0.1cm] (-0.1,0) -- (0.1,0);
\draw [line width=0.1cm] (3,0) -- (6.5,0);
\draw [line width=0.1cm] (8.9,0) -- (9.1,0);
\end{tikzpicture}
\caption{}
\end{figure}

If the middle interval consists of one point, we have the following simple formula for $r_Q(R_+)$.

\begin{prop}
Let $2 \le a < d$ be intergers such that $d \ge a + 2$. Let $R = k[x^d,x^ay^{d-a},y^d]$  and $Q = (x^d,y^d)$.
Then $R$ is a Cohen-Macaulay ring and
$$\reg(R) = r_Q(R_+) =  d/(a, d)-1,$$
where $(a,d)$ denotes the greatest common divisor of $a,d$.
\end {prop}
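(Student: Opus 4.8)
The Cohen--Macaulayness is quoted from \cite[Theorem 2.1]{Tr1}, so by Proposition \ref{buchs} the plan is to prove $r_Q(R_+) = d/(a,d)-1$, and for this I would work entirely through Lemma \ref{interval}. The three generators of $M$ have $x$-exponents $0$, $a$, $d$, so a monomial of degree $n$ in $R$ has $x$-exponent $id+ja$, where $i,j,k \ge 0$ count the factors $x^d$, $x^ay^{d-a}$, $y^d$ and $i+j+k=n$. Since $k=n-i-j$ is a free nonnegative slack, this gives
$$E_n = \{id + ja \mid i,j \ge 0,\ i+j \le n\},$$
and the task becomes to find the least $n$ with $E_{n+1} = E_n + \{0,d\}$.

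Next I would compare the two sets directly. One inclusion is automatic for every $n$: clearly $E_n \subseteq E_{n+1}$, and if $v = id+ja \in E_n$ then $v+d = (i+1)d+ja \in E_{n+1}$, so $E_n + \{0,d\} \subseteq E_{n+1}$. For the reverse, take any $w = id+ja \in E_{n+1}$. If $i+j \le n$ then $w \in E_n$; if $i+j = n+1$ and $i \ge 1$ then $w-d = (i-1)d+ja \in E_n$, so $w \in E_n + d$. The only remaining case is $i=0$, $j=n+1$, that is $w = (n+1)a$. Hence
$$E_{n+1} = E_n + \{0,d\} \iff (n+1)a \in E_n + \{0,d\},$$
so everything reduces to deciding when the single value $(n+1)a$ admits a representation $id+ja$ other than the trivial one coming from $n+1$ copies of the middle generator.

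The heart of the argument is this number-theoretic step. Writing $g=(a,d)$, $a=ga'$, $d=gd'$ with $(a',d')=1$ and $a'<d'$, a relation $(n+1)a = id+ja$ is equivalent to $(n+1-j)a = id$, hence to $(n+1-j)a' = id'$; since $(a',d')=1$ this forces $d' \mid (n+1-j)$, say $n+1-j = d't$, and then $i = a't$. Thus a representation with $i \ge 1$ exists precisely when some integer $t \ge 1$ satisfies $d't \le n+1$, i.e.\ precisely when $d' \le n+1$. Moreover, taking $t=1$ in that case yields $i+j = a' + (n+1-d') = n+1-(d'-a') \le n$, so $(n+1)a$ then already lies in $E_n$; conversely, if $d' > n+1$ the only solution is $t=0$, giving the trivial representation $i=0$, $j=n+1$, so $(n+1)a \notin E_n + \{0,d\}$.

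Combining these, $E_{n+1} = E_n + \{0,d\}$ holds exactly when $d' \le n+1$, i.e.\ when $n \ge d'-1$; the least such $n$ is $d'-1 = d/(a,d)-1$, which by Lemma \ref{interval} is $r_Q(R_+)$. The step I expect to be the main obstacle is the clean reduction to the single value $(n+1)a$, together with the verification that whenever a nontrivial representation of $(n+1)a$ exists the index bound $i+j \le n$ is automatically satisfied; once these are in place, the greatest-common-divisor computation is routine.
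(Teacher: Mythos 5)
Your proof is correct and follows essentially the same route as the paper: the paper's (much terser) argument likewise reduces the question to whether $(x^ay^{d-a})^{n+1}$ is divisible in $R$ by $x^d$ or $y^d$ --- which is exactly your reduction to the single value $(n+1)a$ --- and then reads off $d/(a,d)-1$. You have merely supplied the details, via the sets $E_n$ and the gcd computation, that the paper dismisses as ``easy to see''.
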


\begin{proof}
It is easy to see that $r_Q(R_+)$ is the least integer $n$ such that $(x^a y^{d-a})^{n+1}$ is divided by $x^d$ or, equivalently, by $y^d$. This number is clearly $d/(a, d)-1$. Actually, $R$ is defined by a binomial of degree $d/(a, d)-1$.
\end{proof}

If the middle interval consists of more than one point, the computation of $r_Q(R_+)$ becomes very complicated. However, we can give explicit formulas for $r_Q(R_+)$ in many cases. This is based on the following observation.

\begin{lem} \label{connect}
Let $0 < a < b$ be integers. For any integer $m \ge \dfrac{a -1}{b-a}$, $n \ge m$, and any number $c$, we have
$$\bigcup_{i=m}^n [ia+c,ib+c] = [ma+c,nb+c].$$
\end {lem}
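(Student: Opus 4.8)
The plan is to prove the equality by establishing the two inclusions separately, reducing the nontrivial direction to a single ``no gap'' inequality between consecutive intervals that is controlled precisely by the hypothesis $m \ge \frac{a-1}{b-a}$. Throughout I use the paper's convention that $[p,q]$ denotes the set of integers $t$ with $p \le t \le q$.

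First I would dispose of the inclusion $\bigcup_{i=m}^n [ia+c,ib+c] \subseteq [ma+c,nb+c]$, which is pure monotonicity: because $a>0$ the left endpoints $ia+c$ increase with $i$ and are therefore $\ge ma+c$, while because $b>0$ the right endpoints $ib+c$ increase with $i$ and are therefore $\le nb+c$. Hence each summand interval sits inside $[ma+c,nb+c]$, and so does their union. Note that each interval is nonempty, since $i \ge m \ge \frac{a-1}{b-a} \ge 0$ forces $ia+c \le ib+c$.

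For the reverse inclusion the crucial point is that passing from the $i$-th interval to the $(i+1)$-th leaves no integer uncovered. Concretely, for $m \le i < n$ I would verify $(i+1)a+c \le (ib+c)+1$; after cancelling $c$ this is $i(b-a) \ge a-1$, i.e. $i \ge \frac{a-1}{b-a}$, which holds since $i \ge m$. With this in hand, given any integer $t$ with $ma+c \le t \le nb+c$, I would take $i$ to be the largest index in $\{m,\dots,n\}$ with $ia+c \le t$ (it exists because $i=m$ qualifies). If $i=n$ then $t \le nb+c = ib+c$; if $i<n$ then maximality gives $t < (i+1)a+c$, and the gap inequality upgrades this to $t \le ib+c$. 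Either way $t \in [ia+c,ib+c]$, establishing $[ma+c,nb+c] \subseteq \bigcup_{i=m}^n [ia+c,ib+c]$.

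I expect no serious obstacle: once the problem is framed as covering an integer interval by a chain of integer intervals, everything follows. The only genuinely load-bearing step, and the one to state carefully, is the gap inequality, because this is exactly where the hypothesis $m \ge \frac{a-1}{b-a}$ is consumed, and the ``$-1$'' in that bound is precisely the integrality slack (the adjacent integers $ib+c$ and $ib+c+1$ count as touching). An alternative, perhaps cleaner, packaging is a short induction on $n$, whose inductive step is $[ma+c,nb+c] \cup [(n+1)a+c,(n+1)b+c] = [ma+c,(n+1)b+c]$; this again reduces to the same inequality $(n+1)a+c \le nb+c+1$, so the two approaches are interchangeable.
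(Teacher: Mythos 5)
Your proof is correct and rests on exactly the same point as the paper's, namely the no-gap inequality $(i+1)a+c \le ib+c+1$, equivalently $i(b-a)\ge a-1$, between consecutive integer intervals; the paper merely packages it as the induction on $n$ that you yourself mention as an interchangeable alternative. If anything, your form of the inequality is the sharp one: the paper's proof asserts $(n-1)b+c\ge na+c$, which would require $n-1\ge a/(b-a)$, whereas the hypothesis only yields $(n-1)b+c\ge na+c-1$ --- precisely the integrality slack you isolate, and all that is needed for the two integer intervals to merge.
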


\begin{proof}
We may assume that $n > m$.
By induction on $n$ we may assume that
$$\bigcup_{i=m}^{n-1} [ia+c,ib+c] = [ma+c,(n-1)b+c].$$
Then 
$$\bigcup_{i=m}^n [ia+c,ib+c] = [ma+c,(n-1)b+c] \cup [na +c, nb+c].$$
Since $n-1 \ge \dfrac{a -1}{b-a}$, $(n-1)b + c \ge na + c$. Hence
$$[ma +c, (n-1)b+c] \cup [na +c, nb+c] = [ma +c, nb+c].$$ 
\end{proof}

\begin{thm} \label{3}
Let $2 \le a < b \le d-2$ be integers.
Let $R = k[x^{\alpha}y^{d-\alpha} |  \alpha \in \{0,d\} \cup [a,b]]$ and $Q = (x^d,y^d)$. Then $R$  is a Cohen-Macaulay ring and
\begin{enumerate}[\rm (1)]
\item $\reg(R) = r_Q(R_+) = \left\lceil\dfrac {a+d-1}{b}\right\rceil$ if\; $b\ge 2a-1,$  
\item $\reg(R) = r_Q(R_+) =  \left\lceil\dfrac {2a+d-1}{b}\right\rceil$ if\; $\dfrac {3a-1}{2}\le b < 2a -1.$
\end{enumerate}
\end{thm}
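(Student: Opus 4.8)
The plan is to reduce everything to the combinatorial criterion of Lemma \ref{interval} and then exploit Lemma \ref{connect}. Since $R$ is Cohen-Macaulay by \cite[Theorem 2.1]{Tr1}, Proposition \ref{buchs} gives $\reg(R)=r_Q(R_+)$, so it suffices to compute $r_Q(R_+)$. Writing $E_n$ as the $n$-fold sumset of $\{0,d\}\cup[a,b]$ and separating the contributions of the endpoints $\{0,d\}$ from those of the middle interval, I would record the decomposition
\[
E_n=\bigcup_{k=0}^{n}\bigl(kd+S_{n-k}\bigr),\qquad S_m:=\bigcup_{j=0}^{m}[ja,jb].
\]
The role of the two hypotheses on $b$ is precisely to control $S_m$ through Lemma \ref{connect}: when $b\ge 2a-1$ one has $\frac{a-1}{b-a}\le 1$, so $S_m=\{0\}\cup[a,mb]$ for $m\ge 1$; when $\frac{3a-1}{2}\le b<2a-1$ one has $1<\frac{a-1}{b-a}\le 2$, so the intervals $[ja,jb]$ with $j\ge 2$ still coalesce but $[a,b]$ stays isolated, giving $S_m=\{0\}\cup[a,b]\cup[2a,mb]$ for $m\ge 2$.

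Next I would compare $E_n+\{0,d\}=E_n\cup(E_n+d)$ with $E_{n+1}$ level by level. Since $E_n+d=\bigcup_{k=1}^{n+1}(kd+S_{n+1-k})$ and $S_{n+1-k}\supseteq S_{n-k}$, the two sets agree on every level $k\ge 1$ and on the top point $\{(n+1)d\}$; they differ only at $k=0$, where $E_{n+1}$ contributes $S_{n+1}$ while $E_n+\{0,d\}$ contributes $S_n$. As $S_{n+1}\setminus S_n=(nb,(n+1)b]$ (this is the relevant range, since $p>b$ below forces $r_Q(R_+)\ge 2$, where $S_n$ has the stated form), Lemma \ref{interval} reduces the whole problem to the single containment
\[
(nb,(n+1)b]\subseteq E_n+\{0,d\}.
\]
Because $d\ge b+2$, the level-one top interval $[a+d,nb+d]$ (resp. $[2a+d,nb+d]$) always reaches past $(n+1)b$ at its right end, so everything hinges on how far its left end has descended. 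This singles out the critical point $p=a+d-1$ in case (1) and $p=2a+d-1$ in case (2), sitting one step below the left end of the level-one top interval. For $nb<p$ this point lies in $(nb,(n+1)b]$ and is uncovered, whereas for $nb\ge p$ the level-zero top interval $[a,nb]$ (resp. $[2a,nb]$) has caught up with it and the level-one top interval then covers all of $(nb,(n+1)b]$. Hence $(nb,(n+1)b]\subseteq E_n+\{0,d\}$ if and only if $nb\ge p$, which gives $r_Q(R_+)=\lceil p/b\rceil$ and the two stated formulas.

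The routine direction is ``if'': once $nb\ge p$ the displayed level-one interval covers the whole of $(nb,(n+1)b]$. The delicate direction, and the main obstacle, is the lower bound in case (2): when $nb<2a+d-1$ I must exhibit an uncovered point of $(nb,(n+1)b]$, and here the extra middle intervals $[a+kd,b+kd]$, the higher levels $kd+S_{n-k}$ with $k\ge 2$, and the isolated points $\{kd\}$ could all encroach on the level-one gap. The hypotheses are what confine this: $b<2a-1$ guarantees the gap $(b+d,2a+d)$ at level one is nonempty, $b\ge\frac{3a-1}{2}$ guarantees $S_m$ has no third interval that might fill it, and $d\ge b+2$ keeps the levels $k\ge 2$ and the points $kd$ away from $p$. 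The one genuine coincidence to dispatch is $p=2d$, i.e. $d=2a-1$, where $p$ is itself an isolated point and the last uncovered point slips down to $2a+d-2$; one checks that $\lceil(2a+d-1)/b\rceil=\lceil(2a+d-2)/b\rceil$ in this situation, so the stated formula is unaffected.
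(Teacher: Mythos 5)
Your proposal is correct and follows essentially the same route as the paper: reduce to Lemma \ref{interval}, decompose $E_n$ into the translates $jd+[ia,ib]$, coalesce the middle intervals via Lemma \ref{connect}, and reduce everything to the containment $[nb+1,(n+1)b]\subseteq E_n+\{0,d\}$, whose threshold is $nb\ge a+d-1$ (resp. $2a+d-1$). The only cosmetic difference is in the delicate coincidence $2a+d-1=2d$ of case (2): the paper rules it out by showing no integer $n$ can satisfy $nb+1=2d$ (since $2b<4a-3<3b$), whereas you let the threshold drop to $2a+d-2$ and verify the ceiling is unchanged --- two equivalent resolutions resting on the same divisibility fact.
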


\begin{proof}
We only need to prove the formulas for $r_Q(R_+)$. For $i,j \ge 0$, let
$$I_{i,j} := [ia +jd, ib+jd].$$
Let $E_n := \{\a|\  x^\a y^{nd-\a} \in R\}$. It is easily seen that
\begin{equation*}
E_n  = \bigcup_{i+j\le n}(i[a,b]+jd) = \bigcup_{i+j\le n}I_{i,j},
\end{equation*}
\begin{equation*}
E_{n+1} = \bigcup_{i+j\le n+1}I_{i,j} = E_n \cup \bigcup_{i+j= n+1}I_{i,j}  = E_n \cup \bigcup_{i=0}^{n+1}I_{i,n+1-i}.
\end{equation*}
Notice that $I_{i,j} + 0 = I_{i,j}$ and $I_{i,j} + d= I_{i,j+1}$. Then
\begin{equation*}
E_n + \{0,d\} = \bigcup_{i+j\le n}(I_{i,j} \cup I_{i,j+1}) = E_n \cup \bigcup_{i=0}^nI_{i,n+1-i}.
\end{equation*}
Therefore, $E_{n+1} = E_n + \{0,d\}$ if and only if 
$$I_{n+1,0} = [(n+1)a,(n+1)b] \subseteq E_n + \{0,d\}.$$

We may assume that $n \ge 1$. Then
\begin{equation}
\begin{split}
E_n + \{0,d\} & = \bigcup_{i+j\le n}(I_{i,j} \cup I_{i,j+1}) = \bigcup_{i=0}^n \bigcup_{j = 0}^{n+1-i}I_{i,j}\\
& = \bigcup_{j = 0}^{n+1} I_{0,j} \cup \bigcup_{i = 1}^{n} I_{i,0} \cup \bigcup_{j = 1}^{n} \bigcup_{i=1}^{n+1-j}I_{i,j}.
\end{split}
\end{equation}

If $b\ge 2a-1,$ then $\left\lceil \dfrac{a-1}{b-a}\right\rceil = 1$. By Lemma \ref{connect}, 
\begin{align*} 
\bigcup_{i = 1}^{n} I_{i,0} & = [a,nb],\\
\bigcup_{i=1}^{n+1-j}I_{i,j} & =  [a+jd, (n+1-j)b+jd].
\end{align*}
Hence we may rewrite (3.1) as
\begin{equation}
E_n + \{0,d\} = \{0,d,...,(n+1)d\} \cup [a,nb] \cup \bigcup_{j = 1}^{n} [a+jd, (n+1-j)b+jd].
\end{equation}

For any element $p \in \{2d,...,(n+1)d\} \cup \displaystyle \bigcup_{j = 1}^{n} [a+jd, (n+1-j)b+jd]$, we have $p \ge a+d$. If $p \in [(n+1)a,(n+1)b]$, then $p \le (n+1)b < nb+d$, hence $p \in [a+d,nb+d].$ 
From this it follows that
$$[(n+1)a,(n+1)b] \cap \big(\{2d,...,(n+1)d\} \cup \bigcup_{j = 1}^{n} [a+jd, (n+1-j)b+jd]\big) \subseteq [a+d,nb+d].$$
By (3.2), this implies that $[(n+1)a,(n+1)b] \subseteq E_n + \{0,d\}$ if and only if 
\begin{equation*}
[(n+1)a,(n+1)b] \subseteq  \{d\} \cup [a,nb] \cup [a+d,nb + d].
\end{equation*}
Since $a < (n+1)a \le (n-1)a + b+1 \le nb$, we have 
$$[(n+1)a,(n+1)b] \setminus [a,nb] = [nb+1,(n+1)b].$$
Therefore, $[(n+1)a,(n+1)b] \subseteq E_n + \{0,d\}$ if and only if
\begin{equation}
[nb+1,(n+1)b] \subseteq  \{d\} \cup [a+d,nb + d].  
\end{equation}

Assume that  (3.3) satisfied.
Since the interval $[nb+1,(n+1)b]$ contains $b > 2$ points, $(n+1)b \in [a+d,nb + d]$.
If $nb+1 < a+d$, then (3.3) implies $nb+1 = d = a+d-1$. Hence, we get $a=1$, a contradiction.
So we must have $nb+1 \ge a+d$. Conversely, if $nb+1 \ge a+d$, then 
$[nb+1,(n+1)b] \subseteq  [a+d,nb + d].$ Therefore, (3.3) is satisfied if and only if $nb+1 \ge a+d$, which is equivalent to $n \ge \dfrac{a+d-1}{b}$.
 
Summing up, we have $E_{n+1} = E_n + \{0,d\}$ if and only if $n \ge \dfrac{a+d-1}{b}$. By  Lemma \ref{interval}, this proves statement (1) of Theorem \ref{3}.
\par

If $\dfrac {3a-1}{2}\le b < 2a -1,$ then 
$1 <  \dfrac{a-1}{b-a} \le 2$. Hence
$\left\lceil \dfrac{a-1}{b-a}\right\rceil = 2$. 
By Lemma \ref{connect}, 
\begin{align*} 
\bigcup_{i = 2}^{n} I_{i,0} & = [2a,nb],\\
\bigcup_{i=2}^{n+1-j}I_{i,j} & =  [2a+jd, (n+1-j)b+jd].
\end{align*}
Hence we may rewrite (3.1) as
\begin{multline}
E_n + \{0,d\}  =\\ \{0,d,...,(n+1)d\} \cup [a,b] \cup [2a,nb] \cup \bigcup_{j=1}^n [a+jd,b+jd] \cup 
\bigcup_{j=1}^n[2a+jd, (n+1-j)b+jd].
\end{multline}

Recall that $E_{n+1} = E_n + \{0,d\}$ if and only if 
$$[(n+1)a,(n+1)b] \subseteq E_n + \{0,d\}.$$
Let $p$ be an element of $[(n+1)a,(n+1)b]$ such that
$$p \in \displaystyle \{3d,...,(n+1)d\} \cup \bigcup_{j=2}^n [a+jd,b+jd] \cup \bigcup_{j = 1}^{n} [2a +jd, (n+1-j)b+jd].$$
Then $p \ge 2a+d$. Hence $p \in [2a+d,nb+d]$ because $p \le (n+1)b < nb+d$.
Thus, 
\begin{multline*}
[(n+1)a,(n+1)b] \cap \big(\{3d,...,(n+1)d\} \cup \bigcup_{j=2}^n [a+jd,b+jd] \cup \bigcup_{j = 1}^{n} [2a +jd, (n+1-j)b+jd]\big)\\
\subseteq [2a+d,nb+d].
\end{multline*}
Since $b < 2a-1 < (n+1)a$, $[(n+1)a,(n+1)b] \cap [a,b] = \emptyset$.
Therefore, using (3.4) we have $[(n+1)a,(n+1)b] \subseteq E_n + \{0,d\}$ if and only if
$$[(n+1)a,(n+1)b]  \subseteq \{d,2d\} \cup [2a,nb] \cup [a+d,b+d] \cup [2a + d, nb+d].$$
Since $2a \le (n+1)a \le (n-2)a + 2b + 1 \le nb$, we have 
$$[(n+1)a,(n+1)b] \setminus [2a,nb] = [nb+1,(n+1)b].$$
Therefore, $[(n+1)a,(n+1)b] \subseteq E_n + \{0,d\}$ if and only if 
\begin{equation}
[nb+1,(n+1)b] \subseteq \{d,2d\} \cup [a+d,b+d] \cup [2a + d, nb+d].
\end{equation}

Note that $[nb+1,(n+1)b]$ has $b$ points and $\{d,2d\} \cup [a+d,b+d]$ has $b-a+3$ points. Since $2a - 1 > b \ge 3$, we have $a \ge 3$, which implies $b-a+3 \le b$. If
$[nb+1,(n+1)b] \subseteq  \{d,2d\} \cup [a+d,b+d]$, we would get $[nb+1,(n+1)b] = \{d,2d\} \cup [a+d,b+d]$, which is a contradiction because $\{d,2d\} \cup [a+d,b+d]$ is not an interval of integral points. Thus,
$[nb+1,(n+1)b] \not\subseteq  \{d,2d\} \cup [a+d,b+d]$.

Assume that (3.5) is satisfied. Then $[nb+1,(n+1)b] \cap [2a+d,nb+d] \neq \emptyset$. 
Hence $(n+1)b \in [2a + d, nb+d]$.
If $nb+1 < 2a+d$, then $2a+d-1 \in \{2d\} \cup [a+d,b+d]$.
Since $b+d < 2a+d-1$, this implies $2a+d-1 = 2d$. Hence, $2a-1 = d$.
Since $b+d+1 < 2d$, we also have $b+d+1 \not\in \{2d\} \cup [2a+d,nb+d]$. 
From this it follows that $b+d+1 < nb+1$. 
Hence, $nb+1\in \{2d\} \cup [2a + d, nb+d]$.
Therefore, $nb+1 = 2d$, which implies $n \ge 3$ because $b \le d-2$.
Since $2b \ge 3a-1$,  $nb + 1 \ge 3a + b \ge 4a+1 > 2d$, a contradiction. 
So we must have $nb+1 \ge 2a+d$. 
Conversely, if $nb+1 \ge 2a+d$, then $[nb+1,(n+1)b] \subseteq [2a + d, nb+d].$ 
So we have shown that (3.5) is satisfied if and only if $nb+1 \ge 2a+d$, which is equivalent to 
$n \ge \dfrac {2a+d-1}{b}$. 

Summing up, we have
$E_{n+1} = E_n + \{0,d\}$ if and only if $n \ge \dfrac {2a+d-1}{b}$. By  Lemma \ref{interval}, this proves statement (2) of Theorem \ref{3}. 
\end{proof}

From Theorem \ref{3} we immediately obtain the following results on monomial space curves.

\begin{cor} 
Let $R = k[x^d,x^3y^{d-3},x^2y^{d-2},y^d]$, $d \ge 5$, and $Q = (x^d,y^d)$. Then $R$ is a Cohen-Macaulay ring and
$$\reg(R) = r_Q(R_+) = \left\lceil\dfrac {d+1}{3}\right\rceil.$$
\end{cor}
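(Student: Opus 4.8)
The plan is to derive this directly from Theorem \ref{3} by identifying the correct parameters $a$ and $b$; no genuinely new argument is needed, so the whole task is a verification that the hypotheses match and a substitution into the stated formula.

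First I would put the generating set into the normalized form used in Theorem \ref{3}. The four generators $x^d, x^3y^{d-3}, x^2y^{d-2}, y^d$ correspond to the exponents $\alpha \in \{d,3,2,0\}$, that is $\alpha \in \{0,d\} \cup \{2,3\} = \{0,d\} \cup [2,3]$. Hence $R = k[x^{\alpha}y^{d-\alpha} \mid \alpha \in \{0,d\} \cup [a,b]]$ with $a = 2$ and $b = 3$, which is precisely the shape of the ring treated in Theorem \ref{3}.

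Next I would check that the hypotheses of Theorem \ref{3} are met. We need $2 \le a < b \le d-2$; with $a = 2$ and $b = 3$ the inequalities $2 \le a$ and $a < b$ are immediate, and the only remaining requirement $b \le d-2$ reads $3 \le d-2$, i.e.\ $d \ge 5$, which is exactly the standing assumption. Thus Theorem \ref{3} applies, and in particular it already yields that $R$ is Cohen-Macaulay and that $\reg(R) = r_Q(R_+)$.

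The one point that needs care is deciding which of the two branches of Theorem \ref{3} governs this case, and this is the only real (and very mild) obstacle. Since $2a-1 = 3 = b$, the inequality $b \ge 2a-1$ holds \emph{with equality}, so we are in case (1); one must note that case (2) requires the strict inequality $b < 2a-1$, which fails here, so the boundary value is assigned to case (1). Substituting $a = 2$, $b = 3$ into the formula of case (1) then gives
$$r_Q(R_+) = \left\lceil \frac{a+d-1}{b}\right\rceil = \left\lceil \frac{d+1}{3}\right\rceil,$$
as claimed, and the same value equals $\reg(R)$ by the Cohen-Macaulay conclusion. Everything beyond the boundary bookkeeping is a routine substitution.
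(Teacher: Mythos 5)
Your proposal is correct and is exactly the paper's route: the corollary is stated as an immediate consequence of Theorem \ref{3} with $a=2$, $b=3$, where the hypothesis $b\le d-2$ is precisely $d\ge 5$ and the boundary case $b=2a-1$ falls under branch (1), giving $\left\lceil\frac{d+1}{3}\right\rceil$. No further comment is needed.
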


\begin{cor} 
Let $R = k[x^d,x^4y^{d-4},x^3y^{d-3},y^d]$, $d \ge 6$, and $Q = (x^d,y^d)$. Then $R$ is a Cohen-Macaulay ring and
$$\reg(R) = r_Q(R_+) = \left\lceil\dfrac {d+5}{4}\right\rceil.$$
\end{cor}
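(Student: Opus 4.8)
The plan is to recognize this corollary as a direct specialization of Theorem \ref{3}, so no independent combinatorial work on the sets $E_n$ is needed. First I would rewrite each generator in the normalized form $x^{\alpha}y^{d-\alpha}$: the four monomials $x^d,\ x^4y^{d-4},\ x^3y^{d-3},\ y^d$ correspond to the exponents $\alpha = d,\ 4,\ 3,\ 0$. Hence $M = \{x^{\alpha}y^{d-\alpha}\mid \alpha \in \{0,d\}\cup[3,4]\}$, which is exactly the shape treated in Theorem \ref{3} with the parameter choice $a = 3$ and $b = 4$.

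Next I would check that this choice satisfies the standing hypothesis $2 \le a < b \le d-2$ of Theorem \ref{3}. Substituting gives $2 \le 3 < 4 \le d-2$, and the only nontrivial inequality, $4 \le d-2$, is precisely the assumption $d \ge 6$ made in the statement of the corollary. Thus all hypotheses of Theorem \ref{3} are in force, which already yields the Cohen-Macaulayness of $R$ and the equality $\reg(R) = r_Q(R_+)$; it remains only to pin down the value of $r_Q(R_+)$.

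The one point requiring genuine care is deciding which of the two cases of Theorem \ref{3} applies, since the answer differs between them. Case (1) requires $b \ge 2a-1$, i.e. $4 \ge 5$, which is false; case (2) requires $\tfrac{3a-1}{2} \le b < 2a-1$, i.e. $4 \le 4 < 5$. The lower bound here holds \emph{with equality}, $b = \tfrac{3a-1}{2}$, so I would confirm that Theorem \ref{3}(2) is stated with a non-strict lower inequality (it is), placing us squarely in case (2). This boundary check is the only place where a careless reading could send one into the wrong formula, and it is the step I would flag as the main (if modest) obstacle.

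Finally I would read off the value from case (2): $r_Q(R_+) = \left\lceil \dfrac{2a+d-1}{b}\right\rceil = \left\lceil \dfrac{2\cdot 3 + d - 1}{4}\right\rceil = \left\lceil \dfrac{d+5}{4}\right\rceil$. Combining this with the already-established identity $\reg(R) = r_Q(R_+)$ from Theorem \ref{3} completes the argument.
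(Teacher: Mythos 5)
Your proposal is correct and matches the paper exactly: the corollary is stated there as an immediate consequence of Theorem \ref{3}, obtained by setting $a=3$, $b=4$ and observing that $b=\tfrac{3a-1}{2}<2a-1$ places it in case (2), giving $\left\lceil\tfrac{2a+d-1}{b}\right\rceil=\left\lceil\tfrac{d+5}{4}\right\rceil$. Your explicit verification of the hypothesis $4\le d-2$ (i.e.\ $d\ge 6$) and of the case boundary is exactly the check the paper leaves implicit.
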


The proof of Theorem \ref{3} leads to the following problem.

\begin{quest} 
Let $2 \le a < b \le d-2$ be integers and $r = \left\lceil \dfrac{a-1}{b-a}\right\rceil$.
Let $R = k[x^{\alpha}y^{d-\alpha} |  \alpha \in \{0,d\} \cup [a,b]]$ and $Q = (x^d,y^d)$. Is it true that
$$\reg(R) = r_Q(R_+) = \left\lceil\dfrac {ra+d-1}{b}\right\rceil ?$$
\end{quest} 

%%%%%%%%%%%%%%%%%%%%%%%%%%%%%%%%%%
\section{Case C}

In this section we study the ring $R = k[M]$, where $M$ is represented by the union of two integral intervals which consist of more than one point (see Figure 4). 
 \bigskip

\begin{figure}[h]
\begin{tikzpicture}[scale=0.5]
\draw (-4,0.1) node {Case C:};
\draw [-] (0,0) -- (9,0);
\draw [line width=0.1cm] (0,0) -- (2,0);
\draw [line width=0.1cm] (6,0) -- (9,0);
\end{tikzpicture}
\caption{}
\end{figure}

It is known that a projective monomial curve parametrized by $M$ is smooth if and only if the two intervals at both ends of $M$ consist of one point. Hellus, Hoa and St\"uckrad \cite{HHS} did give explicit formulas for the reduction number of a few sporadic smooth monomial curves. The following result is an easy consequence of \cite[Proposition 3.4]{HHS}. 

\begin{prop} \label{2}
Let $1 \le a < b < d$ be intergers such that $b \ge a+2$. Let $R =  k[x^{\alpha}y^{d-\alpha} | \ \alpha \in [0, a] \cup [b,d]]$ and $Q = (x^d,y^d)$. Then
\[
\reg(R) = r_Q(R_+)= \begin{cases}
&\ \ \ \left\lceil\dfrac {b-1}{a}\right\rceil \ \ \ \ \ \mbox{ if }\ d-b \ge a,\\
&\left\lceil\dfrac{d-a-1}{d-b}\right\rceil \quad \mbox{ if }\ d - b< a.
\end{cases}
\]
\end {prop}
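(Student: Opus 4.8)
The statement has two parts: the identity $\reg R = r_Q(R_+)$ and the explicit value of $r_Q(R_+)$. Since the curves in Case C are not of the Cohen--Macaulay/Buchsbaum type covered by Proposition \ref{buchs}, I would take the first equality from \cite[Proposition 3.4]{HHS}, as the attribution of the Proposition indicates, and concentrate on computing $r_Q(R_+)$ directly via Lemma \ref{interval}. A symmetry first collapses the two cases into one: applying $\a\mapsto d-\a$ (i.e. swapping $x$ and $y$) carries the exponent set $[0,a]\cup[b,d]$ to $[0,d-b]\cup[d-a,d]$, which has the same shape with $(a,b)$ replaced by $(d-b,d-a)$. Under this substitution the hypothesis $d-b\ge a$ becomes $d-b\le a$ and the bound $\lceil(b-1)/a\rceil$ becomes $\lceil(d-a-1)/(d-b)\rceil$, so the two cases are interchanged. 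Hence it suffices to treat $d-b\ge a$.

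Next I would identify $E_n$. As in the proofs of Theorems \ref{1} and \ref{3}, $E_n$ is the $n$-fold sumset of the exponent set, so grouping the summands according to how many, say $j$, are taken from the right interval $[b,d]$,
$$E_n=\bigcup_{j=0}^n\bigl(j[b,d]+(n-j)[0,a]\bigr)=\bigcup_{j=0}^n[\,jb,\ jd+(n-j)a\,].$$
Write $J_j=[\,jb,\ na+j(d-a)\,]$. The decisive observation is that $d-b\ge a$ is equivalent to $d-a\ge b$, so the right endpoints of the $J_j$ grow at least as fast as the left endpoints. Consequently, as soon as the first two intervals meet they all do: if $b\le na+1$, i.e. $na\ge b-1$, then $(j+1)b\le jb+na+1\le na+j(d-a)+1$ for every $j$, so the $J_j$ telescope (the mechanism of Lemma \ref{connect}) and $E_n=[0,nd]$. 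Conversely, if $na\le b-2$ then $J_0=[0,na]$ and $J_1=[b,\dots]$ leave the genuine gap $[na+1,b-1]$, which no $J_j$ covers (all higher $J_j$ start at $jb\ge b$). Thus $E_n$ is the full interval $[0,nd]$ exactly when $na\ge b-1$.

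With this description the computation closes quickly. Put $n_0=\lceil(b-1)/a\rceil$, the least $n$ with $na\ge b-1$. For $n\ge n_0$ both $E_n=[0,nd]$ and $E_{n+1}=[0,(n+1)d]$ are full intervals, whence $E_n+\{0,d\}=[0,nd]\cup[d,(n+1)d]=[0,(n+1)d]=E_{n+1}$, giving $r_Q(R_+)\le n_0$. For the reverse inequality, take any $n<n_0$, so $na\le b-2$; the point $na+1$ lies in the gap $[na+1,b-1]$, so $na+1\notin E_n$, and $na+1\le b-1<d$ forces $na+1\notin E_n+d$, hence $na+1\notin E_n+\{0,d\}$. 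On the other hand $na+1\le(n+1)a$ (as $a\ge1$) puts $na+1$ in the interval $[0,(n+1)a]\subseteq E_{n+1}$. Therefore $E_{n+1}\ne E_n+\{0,d\}$ for every $n<n_0$, and Lemma \ref{interval} yields $r_Q(R_+)=n_0=\lceil(b-1)/a\rceil$. The case $d-b<a$ then follows by the symmetry of the first paragraph.

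I expect the main obstacle to be the middle step: pinning down the precise union for $E_n$ and checking that the hypothesis $d-b\ge a$ is exactly the condition making the intervals $J_j$ telescope, so that the single threshold $na\ge b-1$ governs both the structure of $E_n$ and the equality $E_{n+1}=E_n+\{0,d\}$. Once the clean witness point $na+1$ in the gap is identified, both inequalities are immediate; the genuinely external input is only the equality $\reg R=r_Q(R_+)$, which I would cite from \cite{HHS} rather than reprove.
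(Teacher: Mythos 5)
Your argument is correct, but it takes a genuinely different and more self-contained route than the paper. The paper's proof is essentially a citation: after the same symmetry reduction to the case $d-b\ge a$, it invokes \cite[Proposition 3.4]{HHS} for the \emph{entire} statement $\reg(R)=r_Q(R_+)=\lfloor (b-a-2)/a\rfloor+2$ and then only verifies the arithmetic identity $\lfloor (b-a-2)/a\rfloor+2=\lceil (b-1)/a\rceil$. You instead import from \cite{HHS} only the equality $\reg(R)=r_Q(R_+)$ and recompute $r_Q(R_+)$ from scratch via Lemma \ref{interval}, in the style of the paper's Theorems \ref{1} and \ref{3}. Your computation checks out: the sumset decomposition $E_n=\bigcup_{j=0}^n[\,jb,\ na+j(d-a)\,]$ is the correct description of the degree-$n$ exponents; the hypothesis $d-b\ge a$, i.e. $d-a\ge b$, together with $na\ge b-1$ does make consecutive intervals overlap, since $(j+1)b\le j(d-a)+na+1$; and the witness point $na+1$ works for the lower bound because $na+1\le b-1<d$ rules out membership in $E_n+d$ while $na+1\le (n+1)a$ places it in $E_{n+1}$ (this also covers $n=0$). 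What your approach buys is independence from the exact numerical formula of \cite{HHS} and a treatment uniform with the other cases of the paper; what it costs is length, and it does not remove the only genuinely external input, namely $\reg(R)=r_Q(R_+)$, which cannot be obtained from Proposition \ref{buchs} here (these rings are never Cohen--Macaulay, as the paper remarks) and so must still be cited. The two proofs agree because $\lfloor (b-a-2)/a\rfloor+2=\lceil (b-1)/a\rceil$.
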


\begin{proof} 
By symmetry, we may assume that $d-b \ge a$. Then $R$ satisfies the assumptions of \cite[Proposition 3.4]{HHS}.
From this it follows that
$$\reg(R) = r_Q(R_+) =  \left\lfloor \dfrac {b-a-2}{a} \right\rfloor + 2.$$
It is easy to check that $\left\lfloor \dfrac {b-a-2}{a} \right\rfloor + 2 = \left\lceil\dfrac {b-1}{a}\right\rceil$.
\end{proof}

From Proposition \ref{2} we immediately obtain the following result on monomial space curves.

\begin{cor} \cite[Theorem 3.6(i)]{BGM}
Let $R = k[x^d,x^{d-1}y,xy^{d-1},y^d],$ $d \ge 4$, and $Q = (x^d,y^d)$. Then 
$$\reg(R) = r_Q(R_+) = d-2.$$
\end {cor}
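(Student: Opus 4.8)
The plan is to recognize that this $R$ is precisely the special case of Proposition \ref{2} corresponding to the parameters $a = 1$ and $b = d-1$, and then simply to substitute. The generators $x^d, x^{d-1}y, xy^{d-1}, y^d$ are exactly the monomials $x^\alpha y^{d-\alpha}$ with $\alpha \in \{0,1\} \cup \{d-1,d\} = [0,1] \cup [d-1,d]$, so indeed $R = k[x^\alpha y^{d-\alpha} \mid \alpha \in [0,a] \cup [b,d]]$ with $a = 1$ and $b = d-1$.

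First I would verify that the hypotheses of Proposition \ref{2} hold. We need $1 \le a < b < d$ together with $b \ge a+2$. With $a = 1$ and $b = d-1$, the chain $1 \le 1 < d-1 < d$ is valid for all $d \ge 3$, and the condition $b \ge a+2$ reads $d-1 \ge 3$, i.e. $d \ge 4$, which is exactly the standing assumption of the corollary.

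Next I would decide which branch of the formula applies. Here $d - b = d - (d-1) = 1$, which equals $a = 1$, so $d - b \ge a$ and we land in the first branch. Substituting $a = 1$ and $b = d-1$ into $\left\lceil (b-1)/a \right\rceil$ gives $\left\lceil (d-2)/1 \right\rceil = d-2$. By Proposition \ref{2}, this yields $\reg(R) = r_Q(R_+) = d-2$, as claimed.

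Since this is a direct specialization, there is essentially no obstacle beyond correctly matching the interval data $[0,1] \cup [d-1,d]$ to the parameters $a$ and $b$ and checking that the boundary case $d - b = a$ is assigned to the first (rather than the second) branch. In fact the two branches coincide on this boundary: when $d - b = a$ we have $b - 1 = d - a - 1$, so $\left\lceil (b-1)/a \right\rceil = \left\lceil (d-a-1)/(d-b) \right\rceil$, and hence even an ambiguous reading would return the same value $d-2$.
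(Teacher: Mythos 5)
Your proposal is correct and follows exactly the paper's route: the corollary is stated as an immediate specialization of Proposition \ref{2} with $a=1$, $b=d-1$, landing in the first branch since $d-b=1\ge a$, which gives $\left\lceil (d-2)/1\right\rceil = d-2$. Your additional check that both branches agree on the boundary $d-b=a$ is a nice (if unnecessary) sanity check.
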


\begin{rem}
Under the assumption of Theorem \ref{2}, $R$ is never a Cohen-Macaulay ring. This fact follows from \cite[Theorem 4.7]{Tr1}. 
\end{rem}

There is the following interesting relationship between the Buchsbaumness of $R$ and the reduction number.

\begin{prop} \label{reg 2}
Let $1 \le a < b < d$ be intergers such that $b \ge a+2$. Let $R =  k[x^{\alpha}y^{d-\alpha} | \ \alpha \in [0, a] \cup [b,d]].$ Then the following conditions are equivalent:
\begin{enumerate}[\rm (1)]
\item $R$ is a Buchsbaum ring,
\item $2a+1 \ge b$ and $a +d+1 \ge 2b$,
\item  $r_Q(R_+) = 2$.
\end{enumerate}
\end{prop}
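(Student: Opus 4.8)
The plan is to prove the proposition as a cycle, splitting it into a purely arithmetic equivalence (2)$\Leftrightarrow$(3) and a ring-theoretic equivalence (1)$\Leftrightarrow$(2), the latter through local cohomology. Throughout I would exploit the $x\leftrightarrow y$ symmetry: the isomorphism $x^\alpha y^{d-\alpha}\mapsto x^{d-\alpha}y^\alpha$ sends the parameters $(a,b)$ to $(d-b,d-a)$, interchanges the two inequalities of (2), and preserves Buchsbaumness and $r_Q(R_+)$. Hence I may assume without loss of generality that $d-b\ge a$.

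For (2)$\Leftrightarrow$(3): under $d-b\ge a$, Proposition \ref{2} gives $r_Q(R_+)=\lceil (b-1)/a\rceil$, so (3) holds iff $a<b-1\le 2a$, i.e.\ iff $b\le 2a+1$ (the left inequality being automatic from $b\ge a+2$). On the other hand, when $d-b\ge a$ one has $d\ge a+b$, so $2a+1\ge b$ forces $a+d+1\ge 2a+b+1\ge 2b$; thus under the standing assumption condition (2) collapses to its first inequality $2a+1\ge b$, which is exactly (3).

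For (1)$\Leftrightarrow$(2) I would pass to the overring $\widetilde{R}=k[x,y]^{(d)}=\bigoplus_n\langle x^\alpha y^{nd-\alpha}\mid 0\le\alpha\le nd\rangle$, the full $d$-th Veronese, which is Cohen--Macaulay of dimension $2$ and contains $R$ with the same grading. With $E_n$ as in Lemma \ref{interval}, the graded piece $(\widetilde{R}/R)_n$ is spanned by the ``holes'' $[0,nd]\setminus E_n$. Writing $E_n=\bigcup_{k=0}^n[kb,\,kd+(n-k)a]$, a direct check shows every gap between consecutive intervals closes for $n\gg0$, so $\widetilde{R}/R$ has finite length; hence $H^0_{R_+}(\widetilde{R}/R)=\widetilde{R}/R$, and the cohomology sequence of $0\to R\to\widetilde{R}\to\widetilde{R}/R\to0$ together with $H^0_{R_+}(\widetilde{R})=H^1_{R_+}(\widetilde{R})=0$ gives $H^1_{R_+}(R)\cong\widetilde{R}/R$. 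Since $R$ is a domain with $\depth R=1$ (as $x^d$ is a nonzerodivisor) and $H^1_{R_+}(R)$ is of finite length, $R$ is generalized Cohen--Macaulay, so by the standard criterion \cite{SV} it is Buchsbaum iff $R_+\cdot H^1_{R_+}(R)=0$; in the hole model this reads: for every $n$, every $\alpha\in[0,nd]\setminus E_n$ and every $\gamma\in E_1$ one has $\alpha+\gamma\in E_{n+1}$. I expect the main obstacle to be exactly this step, namely proving that $\widetilde{R}/R$ has finite length and identifying it with $H^1_{R_+}(R)$ so that Buchsbaumness is faithfully translated into this combinatorial condition; once it is in place, the rest is an explicit hole-check.

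To finish I would run the criterion against (2). If $b\le 2a+1$ then $E_2=[0,2d]$, and from $E_2=[0,2d]$ one propagates $E_n=[0,nd]$ for all $n\ge2$ (since $[0,2d]+\{0,d,\dots,(n-2)d\}=[0,nd]$); thus $\widetilde{R}/R$ is concentrated in degree $1$, so $R_+$ annihilates it and $R$ is Buchsbaum. If instead $b\ge 2a+2$, then $a+1$ is a genuine hole of $E_1$ while $a\in E_1$, and $a+1+a=2a+1\notin E_2=[0,2a]\cup[b,a+d]\cup[2b,2d]$, which exhibits a class of $H^1_{R_+}(R)$ not killed by $R_+$, so $R$ is not Buchsbaum. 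Combined with (2)$\Leftrightarrow$(3) and the reduction to $d-b\ge a$, this establishes all three equivalences.
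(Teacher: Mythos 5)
Your proof is correct, and on the equivalence of (2) and (3) it follows exactly the paper's route: reduce by the $x\leftrightarrow y$ symmetry to $d-b\ge a$, invoke Proposition \ref{2} to get $r_Q(R_+)=\left\lceil (b-1)/a\right\rceil$, and note that under this normalization the inequality $a+d+1\ge 2b$ is implied by $2a+1\ge b$, so (2) collapses to $b\le 2a+1$. Where you genuinely diverge is (1)$\Leftrightarrow$(2): the paper settles this in one line by citing the classification of Buchsbaum projections in \cite[Theorem 4.3]{Tr1}, whereas you reprove the needed case from scratch via $0\to R\to\widetilde{R}\to\widetilde{R}/R\to 0$ with $\widetilde{R}=k[x,y]^{(d)}$, the identification $H^1_{R_+}(R)\cong\widetilde{R}/R$ (valid since $\widetilde{R}$ is a finite Cohen--Macaulay $R$-module of depth $2$ and $\widetilde{R}/R$ has finite length), and the theorem from \cite{SV} that a generalized Cohen--Macaulay module with a single non-vanishing local cohomology below its dimension is Buchsbaum iff that module is annihilated by the maximal ideal --- the one nontrivial input, which you correctly flag, and which does apply here precisely because $\depth R=1$ and $\dim R=2$ force $H^0_{R_+}(R)=0$. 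The surrounding combinatorics checks out: $E_n=\bigcup_j[jb,(n-j)a+jd]$ has no gaps once $n\ge b$, so $\widetilde{R}/R$ is of finite length; if $2a+1\ge b$ (and $d-b\ge a$) then $E_2=[0,2d]$ propagates to $E_n=[0,nd]$ for $n\ge 2$, so the holes live only in degree $1$ and are killed by $R_+$; if $b\ge 2a+2$ the hole $a+1$ of $E_1$ shifted by $a\in E_1$ lands on the hole $2a+1$ of $E_2$, defeating even quasi-Buchsbaumness. Your approach costs an excursion through local cohomology and an appeal to the quasi-Buchsbaum criterion, but it buys a self-contained argument independent of \cite{Tr1} and makes transparent why $r_Q(R_+)=2$ is exactly the Buchsbaum threshold in this case.
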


\begin{proof}
By \cite[Theorem 4.3]{Tr1}, $R$ is a Buchsbaum ring if and only if $2a +1\ge b$ and $a +d +1\ge 2b$.\footnote{Actually, this is the case of \cite[Corollary 4.3]{Tr1}, which contains some misprints.}
By symmetry we may assume, without loss of generality, that $d-b \ge a$. 
Then $r_Q(R_+) = \left\lceil\dfrac {b-1}{a}\right\rceil$ by Proposition \ref{2}. 
It is clear that $\left\lceil\dfrac {b-1}{a}\right\rceil = 2$ if and only if $2a +1\ge b$, which implies
$a+d+1 \ge 2a+b+1 \ge 2b$. These facts show that (1)-(3) are equivalent. 
\end{proof}

\begin{cor}
Let $R = k[x^d,x^{d-1}y,xy^{d-1},y^d],$ $d \ge 4$, and   $Q = (x^d,y^d)$. Then $R$ is a Buchsbaum ring if and only if $d=4$. In this case,
$$\reg(R) = r_Q(R_+) = 2.$$
\end {cor}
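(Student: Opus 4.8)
The plan is to verify this corollary as a direct application of the preceding Proposition~\ref{reg 2} and Proposition~\ref{2} to the specific monomial set
$$M = \{x^d,\ x^{d-1}y,\ xy^{d-1},\ y^d\},$$
which corresponds (in the notation of Proposition~\ref{reg 2}) to the exponent set $\{0,1\}\cup\{d-1,d\}$, i.e.\ $a=1$ and $b=d-1$. First I would check that this choice fits the hypotheses $1\le a<b<d$ with $b\ge a+2$: indeed $a=1<b=d-1<d$, and $b-a = d-2\ge 2$ precisely when $d\ge 4$, which is the standing assumption. So the ring $R = k[x^{\alpha}y^{d-\alpha}\mid \alpha\in[0,a]\cup[b,d]]$ of Proposition~\ref{reg 2} specializes exactly to the ring in the corollary.

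Next I would apply the Buchsbaumness criterion, condition~(2) of Proposition~\ref{reg 2}, which states that $R$ is Buchsbaum if and only if $2a+1\ge b$ and $a+d+1\ge 2b$. Substituting $a=1$, $b=d-1$, the first inequality becomes $3\ge d-1$, i.e.\ $d\le 4$; the second becomes $1+d+1\ge 2(d-1)$, i.e.\ $d+2\ge 2d-2$, i.e.\ $d\le 4$. Combined with $d\ge 4$, both inequalities hold if and only if $d=4$. This yields the stated equivalence: $R$ is a Buchsbaum ring precisely when $d=4$. I would present this as the substitution of the two linear conditions and observe that they coincide at the single value $d=4$.

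Finally, for the value of the invariants when $d=4$, I would invoke the equivalence of~(1) and~(3) in Proposition~\ref{reg 2}, which tells us immediately that $R$ Buchsbaum is equivalent to $r_Q(R_+)=2$; hence when $d=4$ we get $r_Q(R_+)=2$. To confirm $\reg(R)=2$ as well, I would note that Proposition~\ref{reg 2} is stated under the same hypotheses as Proposition~\ref{2}, so the equality $\reg(R)=r_Q(R_+)$ from Proposition~\ref{2} applies and gives $\reg(R)=2$. Alternatively, as a sanity check, with $d=4$, $a=1$, $b=3$ we have $d-b=1\ge a=1$, so the first branch of Proposition~\ref{2} applies and $r_Q(R_+)=\lceil (b-1)/a\rceil = \lceil 3/1\rceil$—which however evaluates to $3$, not $2$, signaling that I should double-check which branch is active; the cleaner route is simply to read off $r_Q(R_+)=2$ from the equivalence (1)$\Leftrightarrow$(3) rather than from the case formula.

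The main obstacle here is not conceptual but bookkeeping: the entire statement is a corollary obtained by plugging $a=1$, $b=d-1$ into results already proved, so the only real care needed is ensuring the two Buchsbaum inequalities both reduce to $d\le 4$ (they do), and that the reduction-number value is extracted consistently. The apparent tension in the final sanity check above shows the delicate point: one must take the reduction number from the Buchsbaum equivalence~(1)$\Leftrightarrow$(3), and I would make sure the exposition draws $r_Q(R_+)=2$ from there rather than risk a miscomputation in the boundary case of the Proposition~\ref{2} formula.
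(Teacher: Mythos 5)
Your proof is correct and takes the same route the paper intends: the corollary is an immediate specialization of Proposition~\ref{reg 2} with $a=1$, $b=d-1$, both Buchsbaum inequalities reducing to $d\le 4$, and $r_Q(R_+)=2$ read off from the equivalence (1)$\Leftrightarrow$(3). One correction: the ``tension'' in your sanity check is a substitution slip, not a real discrepancy --- with $d=4$, $a=1$, $b=3$ the first branch of Proposition~\ref{2} gives $\left\lceil (b-1)/a\right\rceil=\left\lceil 2/1\right\rceil=2$ (you plugged in $b$ instead of $b-1$), which agrees with the value $d-2$ from the corollary following Proposition~\ref{2} and with the Buchsbaum equivalence, so both routes are consistent and no special care about branches is needed.
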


%%%%%%%%%%%%%%%%%%%%%%%%%%%%%%%%%%
\section{Case D}

In this section we study the ring $R = k[M]$, when $M$ is represented by one point at one end and two intervals (see Figure 5).
\bigskip

\begin{figure}[h]
\begin{tikzpicture}[scale=0.5]
\draw [-] (0,0) -- (9,0);
\draw [line width=0.1cm] (-0.1,0) -- (0.1,0);
\draw [line width=0.1cm] (3,0) -- (4.5,0);
\draw [line width=0.1cm] (6,0) -- (9,0);
\end{tikzpicture}
\caption{}
\end{figure}

The following result gives a large class of monomial curves of this case for which we can compute the reduction number.

\begin{thm} \label{5}
Let $1 < a \le b < c < d$ be integers. Let $R = k[x^{\alpha}y^{d-\alpha} | \ \alpha \in \{0\} \cup[a,b] \cup [c,d]]$ and $Q = (x^d,y^d)$.
Assume that $c \le 2a$ and $2b \le d$. Then $R$ is a Cohen-Macaulay ring and
$$\reg(R) = r_Q(R_+)  = \left\lceil \dfrac {a-1}{d-c}\right\rceil+1.$$
\end {thm}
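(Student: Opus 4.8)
The plan is to follow exactly the template established in the proofs of Theorem \ref{1} and Theorem \ref{3}. The Cohen-Macaulayness I will take from the appropriate result in \cite{Tr1}, so the real task is to compute $r_Q(R_+)$ via Lemma \ref{interval}, i.e. to find the least $n$ with $E_{n+1} = E_n + \{0,d\}$, where $E_n = \{\alpha \mid x^\alpha y^{nd-\alpha} \in R\}$. First I would introduce the building-block intervals. Writing $A = [a,b]$ and $C = [c,d]$ for the two generating intervals (plus the point $0$), a monomial of degree $nd$ corresponds to a sum of $n$ of the available exponents, so
\begin{equation*}
E_n = \bigcup_{i+j+k = n} \big(i[a,b] + j[c,d]\big) = \bigcup_{i+j+k=n}[ia+jc,\; ib+jd],
\end{equation*}
where the index $k$ counts how many times the point $0$ is used (contributing nothing). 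Since using $0$ only relaxes the constraint, this is the same as ranging over all $i,j \ge 0$ with $i+j \le n$. I would set $I_{i,j} := [ia+jc, ib+jd]$ and record the shift rule $I_{i,j} + d = [ia+jc+d, ib+jd+d]$, noting that the hypothesis $c < d$ means the $+d$ shift interacts with the $C$-direction.

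Next I would reduce $E_{n+1} = E_n + \{0,d\}$ to a single containment. As in Theorem \ref{3}, the difference between $E_{n+1}$ and $E_n + \{0,d\}$ should come down to the one fresh block that uses the most copies of the small interval $[a,b]$, namely $I_{n+1,0} = [(n+1)a,(n+1)b]$; everything else in $E_{n+1}$ already appears in $E_n + \{0,d\}$. So the condition becomes
\begin{equation*}
[(n+1)a,\;(n+1)b] \subseteq E_n + \{0,d\}.
\end{equation*}
The crucial role of the two numerical hypotheses now enters. The condition $2b \le d$ keeps the small-interval blocks $I_{i,0} = [ia,ib]$ \emph{separated} from one another (since $ib < (i+1)a$ fails only when $b \ge a$, but the gaps to the $C$-blocks are controlled by $2b \le d$), so no large merged interval forms on the left; and $c \le 2a$ controls how the shifted $C$-blocks $I_{i,j}$ with $j \ge 1$ fuse together. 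These are precisely the inequalities that let me apply Lemma \ref{connect} to collapse the relevant unions $\bigcup_i I_{i,j}$ into honest intervals, exactly as $b \ge 2a-1$ did in case (1) of Theorem \ref{3}.

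After collapsing, I expect the target block $[(n+1)a,(n+1)b]$ to be covered not by small-interval blocks (which $2b \le d$ prevents from reaching far enough) but by a shifted $C$-block of the form $[(\text{something})\,a + d,\; \ldots]$, so that the whole problem reduces to a single numerical inequality of the shape $na + d - 1 \ge$ (the left endpoint of the target), which rearranges to $n \ge \dfrac{a-1}{d-c}$. Taking the ceiling and accounting for the extra $+1$ (coming from the single $d$-shift needed to bring a $C$-block into position, just as the answer in Theorem \ref{1} had no shift but here the geometry forces one) would give
\begin{equation*}
r_Q(R_+) = \left\lceil \dfrac{a-1}{d-c}\right\rceil + 1.
\end{equation*}
The main obstacle I anticipate is the bookkeeping that shows all the \emph{other} fresh blocks $I_{i,n+1-i}$ with $i \le n$ are genuinely absorbed into $E_n + \{0,d\}$, and that under $c \le 2a$ and $2b \le d$ no unexpected interval merges or gaps appear that would change the governing inequality; this is the same kind of delicate interval-arithmetic verification (endpoint comparisons and emptiness-of-intersection checks) that occupied the bulk of the proof of Theorem \ref{3}, and getting the off-by-one in the final ceiling correct will require care.
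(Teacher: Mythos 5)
There is a genuine gap, and it is in the central reduction step. You claim that the only ``fresh'' block of $E_{n+1}$ not automatically contained in $E_n+\{0,d\}$ is $I_{n+1,0}=[(n+1)a,(n+1)b]$, by analogy with Theorem \ref{3}. But under the hypotheses of this theorem the two inequalities $c\le 2a$ and $2b\le d$ act \emph{jointly} to give $2[a,b]=[2a,2b]\subseteq[c,d]$, which means $i[a,b]\subseteq E_{i-1}$ for every $i\ge 2$; in particular $I_{n+1,0}=(n+1)[a,b]\subseteq [c,d]+(n-1)[a,b]\subseteq E_n$ already. So the containment you single out is vacuously true for all $n\ge 1$ and cannot possibly determine $r_Q(R_+)$. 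Your reading of the two hypotheses as separately controlling ``separation of the $I_{i,0}$'' and ``fusion of the shifted $C$-blocks'' misses this absorption phenomenon, which is the actual point of the assumptions. The paper uses the absorption to show that among the blocks $I_{i,n+1-i}$ only those with $i=0$ and $i=1$ survive, so that
\begin{align*}
E_{n+1} &= E_n \cup [(n+1)c,(n+1)d] \cup [a+nc,\,b+nd],\\
E_n+\{0,d\} &= E_n \cup [nc+d,(n+1)d] \cup [a+(n-1)c+d,\,b+nd],
\end{align*}
and the whole problem reduces to the two containments $[(n+1)c,(n+1)d]\subseteq E_n+\{0,d\}$ and $[a+nc,b+nd]\subseteq E_n+\{0,d\}$. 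The binding inequality is $a+(n-1)c+d-1\le nd$ (the left endpoint of the shifted block $[a+(n-1)c+d,\,b+nd]$ must connect to $[nc,nd]$), which rearranges to $n\ge \frac{d-c+a-1}{d-c}=\frac{a-1}{d-c}+1$; the ``$+1$'' in the answer falls out of this rearrangement rather than from a separate shift-counting argument. Lemma \ref{connect} is not needed here at all. As written, your argument would conclude that $E_{n+1}=E_n+\{0,d\}$ for every $n\ge 1$, i.e.\ $r_Q(R_+)\le 1$, which is false; the proposal therefore needs to be rebuilt around the absorption $[2a,2b]\subseteq[c,d]$ and the correct pair of critical blocks.
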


\begin{proof} 
By \cite[Theorem 3.5]{Tr1}, $R$ is a Cohen-Macaulay ring if 
$$\left\lfloor\dfrac {a}{c-a}\right\rfloor > 
\min\left\{\left\lfloor \dfrac {c-b-2}{d-c}\right\rfloor,\left\lfloor \dfrac {b-1}{d-b}\right\rfloor\right\}.$$
This condition is satisfied because
$$\dfrac{a}{c-a} \ge \dfrac{a}{2a-a} = 1 \ge \dfrac{b}{d-b} > \dfrac{b-1}{d-b}.$$ 

By Lemma \ref{buchs}, it remains to show that 
$r_Q(R_+)  = \left\lceil \dfrac {a-1}{d-c}\right\rceil+1,$
which will be proved by using Lemma \ref{interval}.
For $n \ge 0$ let $E_n := \{\a|\  x^\a y^{nd-\a} \in R\}$.
We have
$$E_n= \bigcup_{i+j\le n}(i[a,b]+j[c,d]) = \bigcup_{i+j\le n}[ia +jc, ib+jd].$$
From this it follows that
\begin{align*}
E_{n+1} & =  E_n \cup \bigcup_{i+j =n+1} [ia +jc, ib+jd]\\
& = E_n \cup \bigcup_{i =0}^{n+1} [ia +(n+1-i)c, ib+(n+1-i)d].\\
E_n + \{0, d\} & = E_n \cup \bigcup_{i+j\le n}[ia +jc+d, ib+(j+1)d]\\
& = E_n \cup \bigcup_{i+j= n}[ia +jc+d, ib+(j+1)d]\\
& = E_n \cup \bigcup_{i=0}^n[ia +(n-i)c+d, ib+(n+1-i)d].
\end{align*}
We have $2[a,b] = [2a,2b] \subseteq [c,d]$ because $c \le 2a$ and $2b \le d$.
Therefore, $i[a,b] \subset E_{i-1}$ for $i \ge 2$. Hence, 
\begin{align*}
[ia +(n-i)c+d, ib+(n+1-i)d] & \subset [ia +(n+1-i)c, ib+(n+1-i)d]\\
& = i[a,b] + (n+1-i)[c,d] \subset E_n
\end{align*}
for $i \ge 2$.
From this it follows that
\begin{align}
E_{n+1} & = E_n \cup [(n+1)c,(n+1)d] \cup [a + nc,b+nd],\\
E_n + \{0, d\} & = E_n \cup [nc+d,(n+1)d] \cup [a + (n-1)c+ d, b +nd].
\end{align}

By (5.1) we have $E_{n+1}=E_n + \{0, d\}$ if and only if  
$$[(n+1)c,(n+1)d] \cup [a + nc,b+nd] \subseteq E_n + \{0, d\}.$$
Let $p$ be an arbitrary element of $[a + nc,b+nd] \cap E_n$. Then $p \ge a+nc > nc$ and $p \le nd$. Hence, 
\begin{equation}
[a + nc,b+nd] \cap E_n \subseteq [nc,nd], 
\end{equation}
Using (5.2) we have $[a + nc,b+nd] \subseteq E_n + \{0, d\}$ if and only if
$$[a + nc,b+nd] \subseteq [nc,nd] \cup [nc+d,(n+1)d] \cup [a + (n-1)c+ d,b +nd].$$
It is easy to check that 
$$[a + nc,b+nd] \cap [nc+d,(n+1)d] \subseteq [nc+d,b +nd] \subseteq [a + (n-1)c+ d, b +nd].$$
Therefore, $[a + nc,b+nd] \subseteq E_n + \{0, d\}$ if and only if
$$[a + nc,b+nd] \subseteq [nc,nd] \cup [a + (n-1)c+ d,b +nd].$$
The last condition is satisfied if and only if
$$[nc,nd] \cup [a + (n-1)c+ d, b +nd] = [nc,b+nd],$$
which is equivalent to $nd \ge a + (n-1)c+ d - 1$.

Similarly as above, we have $[(n+1)c,(n+1)d] \subseteq E_n + \{0, d\}$ if and only if
$$[(n+1)c,(n+1)d] \subseteq  [nc,nd] \cup [nc+d,(n+1)d] \cup [a + (n-1)c+ d,b +nd].$$
If $nd \ge a + (n-1)c+ d - 1$, then 
$b + nd \ge nc+d-1$ because $a + b \ge 2a \ge c$. 
From this it follows that
\begin{align*}
[nc,nd] \cup [nc+d,(n+1)d] \cup [a + (n-1)c+ d,b +nd] & = [nc,(n+1)d]\\
& \supseteq [(n+1)c,(n+1)d].
\end{align*}
Therefore, $[(n+1)c,(n+1)d] \subseteq E_n + \{0, d\}$ if $nd \ge a + (n-1)c+ d - 1$, which is equivalent to
$n \ge \dfrac {d - c + a - 1}{d-c}$.

Summing up, we have $E_{n+1} = E_n + \{0, d\}$ if and only if $n \ge \dfrac {d - c + a -1}{d-c}$.
By Lemma \ref{interval}, this implies $r_Q(R_+) = \left\lceil \dfrac {d-c+a-1}{d-c}\right\rceil = \left\lceil \dfrac {a-1}{d-c}\right\rceil +1.$
\end{proof}

From Theorem \ref{5} we immediately obtain the following consequence on monomial space curves.

\begin{cor} 
Let $R = k[x^d,x^{d-1}y,x^ay^{d-a},y^d]$,  $d \ge 5$, $a = \left\lceil \dfrac {d-1}{2}\right\rceil$, and $Q = (x^d,y^d)$. Then 
$R$ is a Cohen-Macaulay ring and
$$\reg(R) = r_Q(R_+) = \left\lceil\dfrac {d-1}{2}\right\rceil.$$
\end{cor}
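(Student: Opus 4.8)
The plan is to derive this corollary directly from Theorem~\ref{5}, whose hypotheses allow the middle interval to degenerate to a single point. The first step is to recognize the given ring in the shape $k[x^{\alpha}y^{d-\alpha} \mid \alpha \in \{0\} \cup [a,b] \cup [c,d]]$ required there. The set of $x$-exponents occurring in $R = k[x^d, x^{d-1}y, x^a y^{d-a}, y^d]$ is $\{0, a, d-1, d\}$, which we write as $\{0\} \cup [a,a] \cup [d-1,d]$. Thus, in the notation of Theorem~\ref{5}, we take the middle interval to be the single point $[a,b]$ with $b = a$, and the right-hand interval to be $[c,d]$ with $c = d-1$; recall that Theorem~\ref{5} permits the equality $a = b$.

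Next I would verify the three hypotheses $1 < a \le b < c < d$, $c \le 2a$, and $2b \le d$ for $d \ge 5$ and $a = \lceil (d-1)/2 \rceil$. The chain $1 < a \le b < c < d$ reads $1 < a \le a < d-1 < d$, where $1 < a$ holds since $a \ge 2$ for $d \ge 5$, and the only other nontrivial inequality is $a < d-1$, i.e. $\lceil (d-1)/2 \rceil < d-1$, which is clear for $d \ge 5$. The remaining two conditions both reduce to comparing $2\lceil (d-1)/2 \rceil$ with $d-1$ and with $d$, and here one treats the two parities of $d$ separately: when $d$ is odd one has $2a = d-1$, and when $d$ is even one has $2a = d$. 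In either case $c = d-1 \le 2a$ and $2b = 2a \le d$ follow at once.

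With the hypotheses checked, Theorem~\ref{5} yields both the Cohen-Macaulayness of $R$ and the formula $\reg(R) = r_Q(R_+) = \left\lceil (a-1)/(d-c) \right\rceil + 1$. Since $c = d-1$ gives $d - c = 1$, this collapses to $(a-1) + 1 = a = \lceil (d-1)/2 \rceil$, which is the claimed value. There is no genuine obstacle here, as the entire argument is a specialization of Theorem~\ref{5}; the only point demanding a little care is the parity analysis needed to confirm $c \le 2a$ and $2b \le d$, since $a$ is defined through a ceiling.
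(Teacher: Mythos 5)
Your proposal is correct and is exactly the derivation the paper intends: the corollary is stated as an immediate specialization of Theorem~\ref{5} with the middle interval degenerating to the point $[a,a]$ and the right interval being $[d-1,d]$, so that $d-c=1$ and the formula collapses to $a=\left\lceil (d-1)/2\right\rceil$. Your verification of the hypotheses, including the parity check that $c\le 2a$ and $2b\le d$, is precisely the routine work the paper leaves to the reader.
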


%%%%%%%%%%%%%%%%%%%%%%%%%%%%%%%%%

\section{Case E}

In this section we study the ring $R = k[M]$ in the case $M$ is represented by the union of three intervals, where the two intervals at both ends consist of more than one point (see Figure 6).  
\bigskip

\begin{figure}[h]
 \begin{tikzpicture}[scale=0.5]
\draw [-] (0,0) -- (9,0);
\draw [line width=0.1cm] (0,0) -- (2,0);
\draw [line width=0.1cm] (4,0) -- (5.5,0);
\draw [line width=0.1cm] (7,0) -- (9,0);
\end{tikzpicture}
\caption{}
\end{figure}

Using \cite[Proposition 3.4]{HHS} we obtain the following partial result. 

\begin{prop} \label{6}
Let $1 \le a < b \le c < e < d$ be intergers such that $d-e \le a$ and $b - a\ge e-c$. Let $R =  k[x^{\alpha}y^{d-\alpha} | \ \alpha \in [0, a] \cup [b,c] \cup [b,d]]$ and $Q = (x^d,y^d)$. Then
\[
\reg(R) = r_Q(R_+)= \left\lceil\dfrac {b-1}{a}\right\rceil.
\]
\end {prop}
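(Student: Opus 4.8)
The plan is to compute $r_Q(R_+)$ directly from Lemma \ref{interval}, in the spirit of the proofs of Theorems \ref{1} and \ref{5}, and to obtain the equality with $\reg(R)$ from \cite[Proposition 3.4]{HHS}. Writing the three blocks of Figure 6 as $S = [0,a]\cup[b,c]\cup[e,d]$, the set $E_n$ is the $n$-fold sumset of $S$, namely
$$E_n = \bigcup_{i+j+k=n}\, [\,jb+ke,\ ia+jc+kd\,],$$
the union being over $i,j,k \ge 0$. By Lemma \ref{interval} I must find the least $n$ with $E_{n+1} = E_n + \{0,d\}$, equivalently $E_{n+1} \subseteq E_n \cup (E_n + d)$, the reverse inclusion being automatic since $0,d \in S$. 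The target value is $n_0 := \lceil (b-1)/a\rceil$, which is the number of steps of length at most $a$, drawn from the left block $[0,a]$, needed to cover the first gap $[a+1,b-1]$.

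First I would describe the shape of $E_n$. Because $b \ge a+2$ and $e \ge c+2$ (the standing separation conditions of the Introduction), $E_n$ is a union of intervals with controlled gaps, and a telescoping argument of the type in Lemma \ref{connect} applies to each of the three one-parameter families of summands. Using $b-a \ge e-c$ (the first gap is at least as wide as the second) I would show that by the time $n = n_0$ the left block alone forces $E_{n_0} \supseteq [0,n_0a] \supseteq [0,b-1]$, so that the first gap is already filled, while the hypothesis $d-e \le a$ fixes the placement of the contributions of the right block $[e,d]$ relative to the translate $E_n+d$. The statement I am aiming for is two-sided: $E_{n+1}=E_n+\{0,d\}$ holds for $n \ge n_0$ and fails for $n = n_0-1$. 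The failure at $n_0-1$ is the easy direction, since the first gap is not yet bridged.

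The main obstacle is the forward inclusion $E_{n+1} \subseteq E_n \cup (E_n+d)$ at $n=n_0$: one must verify that for every $s \in S$ the translate $E_n+s$ introduces nothing new. The harmless shifts are $s \in [0,a]$ and $s=d$; the dangerous ones are $s \in [b,c]$ and $s \in [e,d-1]$, for which $E_n+s$ can land in the window strictly between the top of $E_n$ and $E_n+d$, and must there already be covered by $E_n$. This is precisely the point where the second gap $[c+1,e-1]$ could in principle impose a later obstruction than the first, and it is the step that consumes both hypotheses at once: $d-e \le a$ to situate the right block, and $b-a \ge e-c$ to guarantee that the second gap is cleared no later than the first. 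I expect this interplay — showing that the first gap is genuinely the binding constraint — to be the crux of the argument. Once it is settled we get $r_Q(R_+) = n_0 = \lceil(b-1)/a\rceil$, and the coincidence $\reg(R) = r_Q(R_+)$ with this same value is supplied by \cite[Proposition 3.4]{HHS}, exactly as in the proof of Proposition \ref{2}.
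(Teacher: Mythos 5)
Your proposal and the paper's proof diverge completely: the paper does no sumset computation at all. Its entire proof is the observation that the hypotheses $d-e\le a$ and $b-a\ge e-c$ place $R$ squarely under \cite[Proposition 3.4]{HHS}, which already gives $\reg(R)=r_Q(R_+)=\left\lfloor\frac{b-a-2}{a}\right\rfloor+2$, followed by the elementary identity $\left\lfloor\frac{b-a-2}{a}\right\rfloor+2=\left\lceil\frac{b-1}{a}\right\rceil$. You cite the same result, but only for the equality $\reg(R)=r_Q(R_+)$; you do not seem to notice that it also supplies the numerical value, which would make your whole combinatorial analysis unnecessary.

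More importantly, as a freestanding argument your proposal has a genuine gap: the step you yourself identify as the crux is never carried out. You reduce to showing $E_{n_0+1}\subseteq E_{n_0}\cup(E_{n_0}+d)$ and correctly isolate the dangerous shifts $s\in[b,c]$ and $s\in[e,d-1]$, where $E_{n_0}+s$ can land in the window between the top of $E_{n_0}$ and $E_{n_0}+d$ and where the second gap $[c+1,e-1]$ could in principle create an obstruction surviving past $n_0$. But for this step you only write that you ``would show'' and ``expect'' the hypotheses $d-e\le a$ and $b-a\ge e-c$ to resolve it; no covering argument is given, and it is exactly here that the structure of $E_n$ as a triple sumset $\bigcup_{i+j+k=n}[jb+ke,\,ia+jc+kd]$ becomes delicate (compare the length of the two-parameter bookkeeping needed in the proof of Theorem \ref{6} in Section 6, which treats a related configuration under different hypotheses). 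Equally, the ``easy direction'' that $E_{n_0}\ne E_{n_0-1}+\{0,d\}$ is asserted but not verified — one must check that the point(s) of $[a+1,b-1]$ not yet reached at stage $n_0-1$ genuinely lie in $E_{n_0}$, i.e.\ that the lower bound is attained and not just an upper bound. As it stands the proposal is a plausible plan, not a proof; to complete it along your lines you would need to write out the interval decomposition of $E_n$ explicitly (in the style of the displays (6.3)--(6.7) of the paper) and verify the covering at $n=n_0$, or else simply quote \cite[Proposition 3.4]{HHS} for the full formula as the paper does.
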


\begin{proof}
Actually, \cite[Proposition 3.4]{HHS} gives the formula 
$$\reg(R) = r_Q(R_+)=  \left\lfloor \dfrac {b-a-2}{a} \right\rfloor + 2.$$
It is easy to check that $\left\lfloor \dfrac {b-a-2}{a} \right\rfloor + 2 = \left\lceil\dfrac {b-1}{a}\right\rceil$.
\end{proof} 

The following result gives another large class of monomial curves for which we can compute the reduction number.

\begin{thm} \label{6}
Let $1 \le a < b \le c < e < d$ be intergers such that $e \le 2b$ and $2c \le a+d$. Let $R =  k[x^{\alpha}y^{d-\alpha} | \ \alpha \in [0, a] \cup [b,c] \cup [b,d]]$ and $Q = (x^d,y^d)$. Then
\[
r_Q(R_+)= \max\left\{\left\lceil\dfrac {b-1}{a}\right\rceil, \left\lceil\dfrac {e-c+a-1}{a}\right\rceil, \left\lceil\dfrac {d-c-1}{d-e}\right\rceil, \left\lceil\dfrac {d-e+b-a-1}{d-e}\right\rceil \right\}.
\]
\end {thm}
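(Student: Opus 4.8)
The plan is to apply Lemma \ref{interval} and find the least $n$ with $E_{n+1}=E_n+\{0,d\}$, where throughout I read the third interval as $[e,d]$ (so $S=[0,a]\cup[b,c]\cup[e,d]$ and $E_n$ is the $n$-fold Minkowski sum of $S$ with itself). The two hypotheses combine, exactly as in the proof of Theorem \ref{5}, to give $2[b,c]=[2b,2c]\subseteq[e,a+d]=[0,a]+[e,d]$, using $e\le 2b$ and $2c\le a+d$. Hence any summand using two copies of $[b,c]$ is contained in the one obtained by replacing them with one copy each of $[0,a]$ and $[e,d]$, without changing the number of summands; iterating, at most one copy of $[b,c]$ is ever needed. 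This yields
$$E_n=\bigcup_{k=0}^{n}[ke,\,(n-k)a+kd]\;\cup\;\bigcup_{k=0}^{n-1}[b+ke,\,(n-1-k)a+c+kd].$$
I will call these the \emph{rows} of $E_n$: each has a left endpoint depending only on its index (the first family at $ke$, the second at $b+ke$) and a right endpoint growing by exactly $a$ as $n$ increases by one. Ordering the left endpoints gives the interleaved sequence $0<b<e<b+e<2e<\cdots<ne$.

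For the upper bound I would show that if $n$ is at least all four quantities in the maximum, then $E_n=[0,nd]$. Consecutive rows overlap precisely when two families of inequalities hold: between $[ke,(n-k)a+kd]$ and $[b+ke,\ldots]$ one needs $(n-k)a+k(d-e)\ge b-1$, and between $[b+ke,(n-1-k)a+c+kd]$ and $[(k+1)e,\ldots]$ one needs $(n-1-k)a+k(d-e)\ge e-c-1$. The crucial observation is that each left-hand side is \emph{affine} in the row index $k$, so on $0\le k\le n-1$ its minimum is attained at $k=0$ or $k=n-1$. Evaluating the first inequality at these two endpoints reproduces exactly $\lceil (b-1)/a\rceil$ and $\lceil (d-e+b-a-1)/(d-e)\rceil$, and the second reproduces $\lceil (e-c+a-1)/a\rceil$ and $\lceil (d-c-1)/(d-e)\rceil$. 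Thus the four quantities in the theorem are precisely the four endpoint conditions, and when $n$ satisfies all of them every pair of consecutive rows connects, forcing $E_n=[0,nd]$. Since $a<d$ and $c<d$, one checks directly that $E_n=[0,nd]$ gives $E_n+[0,a],\ E_n+[b,c],\ E_n+[e,d]\subseteq[0,(n+1)d]=E_n\cup(E_n+d)$, whence $E_{n+1}=E_n+\{0,d\}$ and $r_Q(R_+)\le\max\{\cdots\}$.

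For the lower bound I must show that whenever $n$ fails one of the four conditions, $E_{n+1}\ne E_n+\{0,d\}$. If the first condition fails, $na<b-1$, then $[0,na]$ and $[b,\ldots]$ leave a gap, and $p:=na+1$ lies in $E_{n+1}$ (the row $[0,na]$ grows to $[0,(n+1)a]$) but in neither $E_n$ (it is in the gap) nor $E_n+d$ (since $p<d$ forces $p-d<0$). If the second fails, the analogous witness $p:=(n-1)a+c+1<e<d$ works at the gap between $[b,(n-1)a+c]$ and $[e,\ldots]$. The remaining two conditions are handled by the symmetry $\alpha\mapsto d-\alpha$ (swapping $x,y$), which sends the data to data with parameters $(d-e,d-c,d-b,d-a)$ satisfying the same hypotheses and interchanging conditions $1\leftrightarrow 3$ and $2\leftrightarrow 4$; failure of condition $3$ or $4$ becomes failure of condition $1$ or $2$ for the reflected ring, where the witness above applies. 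Combining both bounds with Lemma \ref{interval} gives the stated formula.

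The main subtlety I anticipate is the upper-bound step, and within it the reduction to at most one copy of the middle interval together with the affine-in-$k$ collapse: this is what turns the a priori $\Theta(n)$ many overlap conditions into just the four endpoint inequalities. The witnesses in the lower bound are then elementary, the key point being that the gaps relevant to conditions $1$ and $2$ lie entirely below $d$, so the shift by $d$ cannot cover them; invoking the symmetry removes any need to analyze gaps near the right end, where a naive witness just past the gap can in fact already lie in $E_n+d$.
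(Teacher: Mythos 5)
Your proposal is correct, and after the shared opening move it takes a genuinely different route from the paper's. Both proofs begin identically: the typo $[b,d]$ is read as $[e,d]$, and the hypotheses $e\le 2b$, $2c\le a+d$ give $2[b,c]\subseteq[0,a]+[e,d]$, so that $E_n=F_n\cup(F_{n-1}+[b,c])$ with $F_n=\bigcup_k[ke,(n-k)a+kd]$ --- your ``rows'' are exactly the paper's decomposition. From there the paper compares $E_{n+1}$ with $E_n+\{0,d\}$ directly: it computes both as unions of rows, shows the difference is controlled by the four ``extremal'' rows $[0,(n+1)a]$, $[b,na+c]$ and their mirrors, and checks by interval manipulation when these are covered, arriving at the inequalities $na+1\ge b$ and $(n-1)a+c+1\ge e$ plus their reflections. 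You instead prove the stronger structural statement that $E_n=[0,nd]$ once $n$ meets all four bounds, by observing that the $2n$ consecutive-row overlap conditions are affine in the row index $k$ and hence are governed by their values at $k=0$ and $k=n-1$, which are precisely the four quantities in the theorem; the lower bound is then handled by explicit gap witnesses $na+1$ and $(n-1)a+c+1$ (which cannot lie in $E_n+d$ since they are $<d$), with the reflection $\alpha\mapsto d-\alpha$ disposing of the other two conditions exactly as the paper's symmetry step does. I checked the details: the interleaving $0<b<e<b+e<\cdots<ne$ of left endpoints uses $b<e$, the two families of overlap inequalities evaluated at the endpoints do reproduce the four ceilings, $E_n=[0,nd]$ does force $E_{n+1}=E_n+\{0,d\}$ since the latter then equals $[0,(n+1)d]$, and the witnesses lie in the rows $[0,(n+1)a]$ and $[b,na+c]$ of $E_{n+1}$ but in no row of $E_n$. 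The affine-collapse argument is arguably cleaner and yields the extra information that $R_n$ is the full Veronese component for $n\ge r_Q(R_+)$, which the paper's proof does not make explicit; the paper's version, on the other hand, isolates more directly which parts of $E_{n+1}$ can fail to be reached, which is what its witnesses implicitly are. No gap.
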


\begin{proof} 
For $n \ge 0$ let $E_n := \{\a|\  x^\a y^{nd-\a} \in R\}$ and 
$F_n  := \{\a|\  x^\a y^{nd-\a} \in S\},$
where $S = k[x^{\alpha}y^{d-\alpha} | \ \alpha \in [0, a] \cup [e,d]]$.
It is easy to see that 
$$E_n = \bigcup_{i=0}^n (F_{n-i}+i[b,c]).$$ 
The assumption $e \le 2b$ and $2c \le a+d$ implies 
$2[b,c] = [2b,2c] \subseteq [e,a+d] = [0,a] + [e,d] \subseteq F_2$.
From this it follows that 
$$E_n = F_n \cup (F_{n-1}+[b,c])$$
for $n \ge 1$. Therefore,  
\begin{align}
E_{n+1}  & = F_{n+1} \cup (F_n+[b,c]),\\
E_n + \{0, d\} & = (F_n + \{0, d\}) \cup   (F_{n-1} + [b,c] + \{0, d\}).
\end{align}

We have
$$F_n= \bigcup_{i+j=n}(i[0,a]+j[e,d]) = \bigcup_{i+j=n}[je, ia+jd]=\bigcup_{j=0}^{n}[je, (n-j)a+jd]. $$
From this it follows that 
\begin{align}
F_{n+1}  & = [0,(n+1)a]  \cup \bigcup_{j=1}^n [je, (n+1-j)a+jd] \cup [(n+1)e, (n+1)d],\\
F_n + [b,c] & = [b,na+c]  \cup \bigcup_{j=1}^{n-1} [b+je, c+ (n-j)a+jd] \cup [b+ne, c+nd].
\end{align}
Note that $F_n + \{0, d\} = (F_n + 0)\cup (F_n +d)$. Then
\begin{equation}
F_n + \{0, d\} = \bigcup_{j=0}^{n}[je, (n-j)a+jd] \cup \bigcup_{j=0}^{n}[je+d, (n-j)a+(j+1)d].
\end{equation}
For $j = 1,...,n$, we have $(n-j)a+jd \ge (j-1)e+d$, hence
$$[je, (n-j)a+jd] \cup [(j-1)e+d, (n-j+1)a+jd] = [je, (n-j+1)a+jd].$$
Therefore, we may rewrite (6.5) as
\begin{equation}
F_n + \{0, d\}  = [0,na] \cup \bigcup_{j=1}^{n}[je, (n+1-j)a+jd] \cup [ne+d,(n+1)d],
\end{equation}
From this it follows that
\begin{multline}
F_{n-1}+ [b,c] + \{0,d\} =\\
[b,(n-1)a+c] \cup \bigcup_{j=1}^{n-1}[b+je, (n-j)a+c+jd] \cup [b+(n-1)e+d,c+nd].
\end{multline}

Comparing (6.3) and (6.4) with (6.6) and (6.7) we have
\begin{align*}
& F_{n+1}  \setminus (F_n + \{0, d\}) \subseteq ([0,(n+1)a] \cup [(n+1)e, (n+1)d]),\\
& (F_n+[b,c]) \setminus (F_{n-1}+ [b,c] + \{0, d\}) \subseteq [b,na+c] \cup [b+ne, c+nd].
\end{align*}
From (6.1) and (6.2) it follows that $E_{n+1} = E_n + \{0, d\}$
if and only if  
\begin{equation}
[0,(n+1)a] \cup [(n+1)e, (n+1)d] \cup [b,na+c] \cup [ne+b,nd+c]\subseteq E_n + \{0, d\}.
\end{equation}

Now we are going to check when $[0,(n+1)a] \cup [b,na+c]$ is contained in $E_n + \{0, d\}.$
Let $p$ is an element of $[0,(n+1)a]$. 
If $p \in \displaystyle \bigcup_{j=1}^n[je, (n+1-j)a+jd]$ or $p \in [ne+d,(n+1)d]$, then $p \ge e$, which implies 
$p \in [e, na+d]$ because $p \le (n+1)a < na +d$. 
Therefore, using (6.6) we have
\begin{equation}
[0,(n+1)a] \cap (F_n + \{0, d\}) \subseteq [0,na] \cup [e, na+d].
\end{equation}
Similarly, if $p \in  \displaystyle \bigcup_{j=1}^{n-1}[b+je, (n-j)a+c+jd]$ or if $p \in [b+(n-1)e+d,c+nd]$, then $p \in [e, na+d]$. Therefore, using (6.7) we have
\begin{equation}
[0,(n+1)a] \cap (F_{n-1}+[b,c] + \{0, d\}) \subseteq [b,(n-1)a+c] \cup [e, na+d].
\end{equation}
By (6.2), it follows that
$[0,(n+1)a] \subseteq F_n + \{0, d\}$ if and only if 
\begin{equation}
[0, (n+1)a] \subseteq [0, na] \cup [b, (n-1)a + c] \cup [e,na+d].
\end{equation}
Similarly, we can show that $[b,na+c] \subseteq F_n + \{0, d\}$ if and only if 
\begin{equation}
[b,na+c] \subseteq [0, na] \cup [b, (n-1)a + c] \cup [e,na+d].
\end{equation}
It is easily seen that if (6.11) is satisfied, then $na+1 \ge b$ and that if (6.12) is satisfied, then $(n-1)a+c+ 1 \ge e$. Conversely, if $na+1 \ge b$ and $(n-1)a+c+ 1 \ge e$, then
$$[0, na] \cup [b, (n-1)a + c] \cup [e,na+d] = [0,na+d] \supseteq [0, (n+1)a] \cup [b,na+c],$$
hence (6.11) and (6.12) are satisfied. Thus, $[0, (n+1)a] \cup [b,na+c] \subseteq E_n + \{0, d\}$ if and only if 
$$n \ge \max\left\{\dfrac {b-1}{a}, \dfrac {e-c+a-1}{a}\right\}.$$

By symmetry, we have $[(n+1)e, (n+1)d] \cup[ne+b,nd+c]\subseteq E_n + \{0, d\}$ if and only if 
$$n \ge \max\left\{\dfrac {d-c-1}{d-e}, \dfrac {d-e+b-a-1}{d-e}\right\}.$$

Therefore, (6.9) and hence the condition $E_{n+1} = E_n + \{0, d\}$ are satisfied if and only if
$$n \ge \max\left\{\dfrac {b-1}{a}, \dfrac {e-c+a-1}{a},\dfrac {d-c-1}{d-e}, \dfrac {d-e+b-a-1}{d-e} \right\}.$$
By Lemma \ref{interval}, this implies 
$$
r_Q(R_+)= \max\left\{\left\lceil\dfrac {b-1}{a}\right\rceil, \left\lceil\dfrac {e-c+a-1}{a}\right\rceil, \left\lceil\dfrac {d-c-1}{d-e}\right\rceil, \left\lceil\dfrac {d-e+b-a-1}{d-e}\right\rceil \right\}.
$$
\end{proof}

We are not able to give an answer to the following question.

\begin{quest}
Is it true that $\reg(R) = r_Q(R_+)$ under the assumption of Theorem \ref{6}?
\end{quest}

\noindent {\bf Acknowledgement}. This paper is partially supported by grant 101.04-2019.313 of Vietnam National Foundation for Science and Technology Development.

%%%%%%%%%%%%%%%%%%%%%%%%%%%%%%%

\end{document}